\theoremstyle{plain}
\newtheorem{Thm}{Theorem}[section]
\newtheorem{Crlr}[Thm]{Corollary}
\newtheorem{Prop}[Thm]{Proposition}
\newtheorem{Rem}[Thm]{Remark}
\newtheorem{Conj}[Thm]{Conjecture}
\theoremstyle{definition}
\newtheorem{Def}[Thm]{Definition}
\def\finf{\mathop{{\rm I}\kern -.27 em {\rm F}}\nolimits}
\begin{document}

\title{On Zero Forcing Number of Functigraphs}

\author{{\bf Cong X. Kang} and {\bf Eunjeong Yi}\\
\small Texas A\&M University at Galveston, Galveston, TX 77553, USA\\
{\small\em $^1$kangc@tamug.edu}; {\small\em $^2$yie@tamug.edu}}

\maketitle

\date{}

\begin{abstract}
\emph{Zero forcing number}, $Z(G)$, of a graph $G$ is the minimum cardinality of a set $S$ of black vertices (whereas vertices in $V(G)\!\setminus\!S$ are colored white) such that $V(G)$ is turned black after finitely many applications of ``the color-change rule": a white vertex is converted black if it is the only white neighbor of a black vertex. Zero forcing number was introduced and used to bound the minimum rank of graphs by the ``AIM Minimum Rank -- Special Graphs Work Group". Let $G_1$ and $G_2$ be disjoint copies of a graph $G$ and let $f: V(G_1) \rightarrow V(G_2)$ be a function. Then a \emph{functigraph} $C(G, f)=(V, E)$ has the vertex set $V=V(G_1) \cup V(G_2)$ and the edge set $E=E(G_1) \cup E(G_2) \cup \{uv \mid v=f(u)\}$. For a connected graph $G$ of order $n \ge 3$, it is readily seen that $1+\delta(G) \le Z(C(G, \sigma)) \le n$ for any permutation $\sigma$; we show that $1+ \delta(G) \le Z(C(G, f)) \le 2n-2$ for any function $f$, where $\delta(G)$ is the minimum degree of $G$. We give examples showing that there does not exist a function $g$ such that, for every pair $(G,f)$, $Z(G)<g(Z(C(G,f)))$ or $g(Z(G))>Z(C(G,f))$. We further investigate the zero forcing number of functigraphs on complete graphs, on cycles, and on paths. 
\end{abstract}

\noindent\small {\bf{Key Words:}} zero forcing set, zero forcing number, permutation graph, generalized prism, functigraph, complete graph, cycle, path

\vspace{.05in}

\small {\bf{2000 Mathematics Subject Classification:}} 05C50, 05C38, 05D99\\ 

%%%%%%%%%%%%%%%%%%%%%%%%%%%%%%%%%%%%%%%%%%%%%%%%%%
%%%%%%%%%%%%%%%%%%%%%%%%%%%%%%%%%%%%%%%%%%%%%%%%%%

\section{Introduction}

Let $G = (V(G),E(G))$ be a finite, simple, connected, and undirected graph of order $|V(G)|=n \ge 2$. For a given graph $G$ and $S \subseteq V(G)$, we denote by $\langle S \rangle$ the subgraph induced by $S$. For a vertex $v \in V(G)$, the \emph{open neighborhood of $v$} is the set $N_G(v)=\{u \mid uv \in E(G)\}$ and the \emph{closed neighborhood of $v$} is the set $N_G[v]=N_G(v) \cup \{v\}$. The \emph{degree} $\deg_G(v)$ of a vertex $v \in V(G)$ is the the number of edges incident with the vertex $v$ in $G$. We denote by $\delta(G)$ the \emph{minimum degree} of a graph $G$. The \emph{distance} between two vertices $v, w \in V(G)$, denoted by $d_G(v,w)$, is the length of the shortest path between $v$ and $w$. For other terminologies in graph theory, refer to \cite{CZ}.\\

The notion of a zero forcing set, as well  as the associated zero forcing number, of a simple graph was introduced by the ``AIM Minimum Rank -- Special Graphs Work Group" in~\cite{AIM} to bound the minimum rank of associated matrices for numerous families of graphs. Let each vertex of a graph $G$ be given one of two colors, ``black" and ``white" by convention. Let $S$ denote the (initial) set of black vertices of $G$. The \emph{color-change rule} converts the color of a vertex from white to black if the white vertex $u_2$ is the only white neighbor of a black vertex $u_1$; we say that $u_1$ forces $u_2$, which we denote by $u_1 \rightarrow u_2$. And a sequence, $u_1 \rightarrow u_2 \rightarrow \cdots \rightarrow u_{i} \rightarrow u_{i+1} \rightarrow \cdots \rightarrow u_t$, obtained through iterative applications of the color-change rule is called a \emph{forcing chain}. Note that, at each step of the color change, there may be two or more vertices capable of forcing the same vertex. The set $S$ is said to be \emph{a zero forcing set} of $G$ if all vertices of $G$ will be turned black after finitely many applications of the color-change rule. The \emph{zero forcing number} of $G$, denoted by $Z(G)$, is the minimum of $|S|$ over all zero forcing sets $S \subseteq V(G)$. \\

Since its introduction by the aforementioned ``AIM group", zero forcing number has become a graph parameter studied for its own sake, as an interesting invariant of a graph. In~\cite{iteration}, the authors studied the number of steps it takes for a zero forcing set to turn the entire graph black; they named this new graph parameter the \emph{iteration index} of a graph: from the ``real world" modeling (or discrete dynamical system) perspective, if the initial black set is capable of passing a certain condition or trait to the entire population (i.e. ``zero forcing"), then the iteration index of a graph may represent the number of units of time (anything from days to millennia) necessary for the entire population to acquire the condition or trait. Independently, Hogben et al. studied the same parameter (iteration index) in~\cite{proptime}, which they called \emph{propagation time}. It's also noteworthy that physicists have independently studied the zero forcing parameter, referring to it as the \emph{graph infection number}, in conjunction with the control of quantum systems (see \cite{p1}, \cite{p2}, and \cite{p3}). More recently in~\cite{dimZ, dimZ2}, the authors initiated a comparative study between metric dimension and zero forcing number for graphs. In \cite{pzf}, the authors also introduced a probabilistic theory of zero forcing in graphs. For more articles and surveys pertaining to the zero forcing parameter, see \cite{pathcover, min-degree, Z+e, dimZ, ZGbar, ZFsurvey, ZFsurvey2, cutvertex}.\\

Chartrand and Harary \cite{permutation} introduced ``permutation graphs" (or ``generalized prisms"). Hedetniemi \cite{H} introduced the ``function graph", which consists of two graphs (not necessarily identical copies) with a function relation between them. Independently, D\"{o}rfler \cite{WD} introduced a ``mapping graph", which consists of two disjoint identical copies of a graph and additional edges between the two vertex sets specified by a function. The ``mapping graph" is rediscovered and studied in \cite{functi}, where it is called a ``functigraph". For articles on functigraphs, see \cite{functidom} and \cite{functimetric}. We recall the definition of the functigraph.

\begin{Def}
Let $G_1$ and $G_2$ be disjoint copies of a graph $G$, and let $f: V(G_1) \rightarrow V(G_2)$ be a function. A \emph{functigraph} $C(G, f)=(V, E)$ consists of the vertex set $V=V(G_1) \cup V(G_2)$ and the edge set $E(G)=E(G_1) \cup E(G_2) \cup \{uv \mid v=f(u)\}$.
\end{Def}

In this paper, we study the zero forcing number of functigraphs. For a graph $G$ of order $n\ge 3$, it is readily seen that $1+\delta(G) \le Z(C(G, \sigma)) \le n$ for a permutation $\sigma$; we show that $1+ \delta(G) \le Z(C(G,f)) \le 2n-2$ for a function $f$, and the bounds are sharp. We give examples showing that there does not exist a function $g$ such that, for every pair $(G,f)$, $Z(G)<g(Z(C(G,f)))$ or $g(Z(G))>Z(C(G,f))$. Further, we give zero forcing number of functigraphs on complete graphs $K_n$, and we give bounds for zero forcing number of functigraphs on cycles $C_n$ and on paths $P_n$. 

%%%%%%%%%%%%%%%%%%%%%%%%%%%%%%%%%%%%%%%%%%%%%%%%%%
%%%%%%%%%%%%%%%%%%%%%%%%%%%%%%%%%%%%%%%%%%%%%%%%%%

\section{Bounds on Zero Forcing Number of Functigraphs}

The \emph{path cover number} $P(G)$ of $G$ is the minimum number of vertex disjoint paths, occurring as induced subgraphs of $G$, that cover all the vertices of $G$. Next, we recall the definition that is stated in \cite{pp}. A graph $G$ is a graph of \emph{two parallel paths} if there exist two independent induced paths of $G$ that cover all the vertices of $G$ and such that $G$ can be drawn in the plane in such a way that the two paths are parallel and the edges (drawn as segments, not curves) between the two paths do not cross. A simple path is not considered to be such a graph. A graph that consists of two connected components, each of which is a path, is considered to be such a graph.

\begin{Thm} \cite{AIM, pathcover, cutvertex} \label{pathcover}
\begin{itemize}
\item[(a)] \cite{pathcover} For any graph $G$, $P(G) \le Z(G)$.
\item[(b)] \cite{AIM} For any tree $T$, $P(T) = Z(T)$.
\item[(c)] \cite{cutvertex} For any unicyclic graph $G$, $P(G)=Z(G)$.
\end{itemize}
\end{Thm}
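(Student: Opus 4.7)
The plan is to establish (a) from a direct analysis of the forcing dynamics, and then derive (b) and (c) by constructing an explicit zero forcing set from a minimum induced path cover.

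For (a), I would fix a minimum zero forcing set $S$ and execute the color-change process, recording for each non-$S$ vertex the unique vertex that forced it. This defines, for each $u \in S$, a \emph{forcing chain} $u = u_1 \to u_2 \to \cdots \to u_t$, and the family of these chains partitions $V(G)$ into exactly $|S| = Z(G)$ classes. It then remains to argue that each chain induces a path in $G$. Suppose for contradiction that $u_i u_k \in E(G)$ with $k > i+1$; since the forcings along a single chain must occur in time order (as $u_{j+1}$ must be black before it can force $u_{j+2}$), the vertex $u_k$ is still white at the moment $u_i$ forces $u_{i+1}$, so $u_k$ is a second white neighbor of $u_i$, contradicting the color-change rule. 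Hence each forcing chain is an induced path of $G$, yielding $P(G) \le Z(G)$.

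For (b), granting (a), I would focus on the reverse inequality $Z(T) \le P(T)$. Starting from a minimum induced path cover $\mathcal{P} = \{P_1, \ldots, P_k\}$ of $T$, the plan is to choose one endpoint $v_i$ of each $P_i$ and show that $S = \{v_1, \ldots, v_k\}$ is a zero forcing set of $T$ by induction on $|V(T)|$. A leaf $\ell$ of $T$ must be an endpoint of its path in $\mathcal{P}$ (its degree in $T$ is $1$), so after coloring $\ell$ black we may remove $\ell$ to obtain a tree $T'$ with an induced path cover of size $k$ or $k-1$; the inductive hypothesis supplies a zero forcing set of $T'$ that, reassembled with $\ell$, zero forces $T$. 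Some bookkeeping is required in the case when $P(T') = k-1$ so that no redundant vertex is introduced.

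For (c), the unicyclic case, the strategy is to adapt (b) by isolating the unique cycle. I would delete a carefully chosen edge or vertex of the cycle to reduce to a forest, invoke the tree version of (b), and then patch the resulting forcing set by adding one more well-placed vertex if needed to compensate for the broken cycle; the challenge is to execute this patching while controlling the change in $P(G)$ so the bound remains tight, typically via a case analysis on whether the cycle lies entirely inside one path of $\mathcal{P}$ or is split among several. The main obstacle across (b) and (c) is exactly this reverse inequality: constructing a zero forcing set from a path cover and rigorously verifying that the color change propagates requires more delicate combinatorial control than the easy forward direction (a), particularly because off-path neighbors must be disposed of in the correct order to avoid stalling the forcing process.
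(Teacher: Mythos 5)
The paper itself offers no proof of this theorem---it is quoted from \cite{pathcover}, \cite{AIM}, and \cite{cutvertex}---so your proposal can only be judged on its own terms. Part (a) is correct and is the standard argument: fix a chronological list of forces for a minimum zero forcing set $S$, note that each vertex is forced at most once and (since a vertex that forces has all neighbors black afterwards) forces at most once, so the forcing chains partition $V(G)$ into $|S|=Z(G)$ classes; a chord $u_iu_k$ with $k>i+1$ is impossible because $u_k$ is still white when $u_i$ performs its force, giving $u_i$ a second white neighbor. No complaints there.

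The gap is in (b). Your induction deletes a leaf $\ell$ and, in the case $P(T-\ell)=P(T)=k$, asks the inductive hypothesis for a zero forcing set $S'$ of $T'=T-\ell$ of size $k$ and then ``reassembles'' it with $\ell$. But $S'\cup\{\ell\}$ has size $k+1$, which is too large, while $S'$ alone need not force $T$: already for $T=P_4$ with vertices $\ell, p, q, r$ and deleted leaf $\ell$, the hypothesis may hand you $S'=\{p\}$, which forces $T'=P_3$ but stalls in $T$ because $p$ acquires $\ell$ as a second white neighbor and no other vertex is black. So the failure is not bookkeeping: the inductive statement must be strengthened (e.g., to assert that one can select one endpoint per path of the \emph{given} minimum path cover, coordinated with its pendant structure, so that the selected set forces), and the deleted leaf must be chosen compatibly with that cover; this is where the real work of the tree case lies. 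Part (c) as written is only a statement of intent---deleting a cycle edge changes $P(G)$ in ways you have not controlled, and ``adding one more well-placed vertex if needed'' can overshoot $P(G)$; the cited source \cite{cutvertex} instead argues through the cut-vertex reduction (Theorem~\ref{cutV} of this paper) rather than an edge-deletion-and-patch scheme.
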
 

\begin{Thm}\cite{min-degree}\label{mindegree}
For any graph $G$, $Z(G) \ge \delta(G)$.
\end{Thm}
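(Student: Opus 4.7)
The plan is to argue directly from the definition of a zero forcing set by examining the very first application of the color-change rule. Let $S$ be a zero forcing set of $G$ realizing $|S| = Z(G)$. First I would dispose of the trivial case $S = V(G)$: since $G$ has order $n \ge 2$ and $\delta(G) \le n-1$, we immediately get $|S| = n > \delta(G)$.

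Assuming then that $S \subsetneq V(G)$, the forcing process is nonempty, so there is a well-defined first color change $u \to v$. Because no forcing has yet occurred at this moment, the set of black vertices at that instant is exactly $S$. In particular, $u \in S$, and the color-change rule applied at $u$ requires that every neighbor of $u$ other than $v$ is black, hence lies in $S$. This gives the key inclusion $\{u\} \cup (N_G(u) \setminus \{v\}) \subseteq S$, whose left-hand side has cardinality $1 + (\deg_G(u) - 1) = \deg_G(u) \ge \delta(G)$.

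Combining the two cases yields $Z(G) = |S| \ge \delta(G)$, which is the desired inequality. The proof is essentially a one-line counting argument once the correct snapshot of the process (the moment right before the first force) is identified; the only subtlety is remembering to handle separately the degenerate case where no forcing step is needed, since otherwise the phrase ``first forcing step'' is vacuous. There is no real obstacle beyond this bookkeeping, and the argument uses nothing about $G$ beyond the definitions of $\delta(G)$ and of the color-change rule.
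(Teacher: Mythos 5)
Your argument is correct. The paper itself gives no proof of this statement --- it is quoted from the reference \cite{min-degree} as a known result --- so there is nothing internal to compare against, but your reasoning is the standard one: if $S \subsetneq V(G)$, the vertex $u$ performing the first force must lie in $S$ together with all of its neighbors except the single white vertex it forces, giving $|S| \ge 1 + (\deg_G(u)-1) = \deg_G(u) \ge \delta(G)$; and the degenerate case $S = V(G)$ is handled by $n > \delta(G)$. Both cases are airtight and the bookkeeping (in particular, not double-counting $u$, which is not its own neighbor in a simple graph) is handled correctly.
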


\begin{Thm} \cite{AIM} \label{idupper}
For any graphs $G$ and $H$, $Z(G \square H) \le \min \{Z(G)|V(H)|, Z(H)|V(G)|\}$, where $G \square H$ denotes the Cartesian product of $G$ and $H$. 
\end{Thm}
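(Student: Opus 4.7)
The plan is to prove $Z(G \square H) \le Z(G) \cdot |V(H)|$; the companion bound $Z(G \square H) \le Z(H) \cdot |V(G)|$ follows by swapping the roles of $G$ and $H$ (the Cartesian product is symmetric in its factors, i.e.\ $G \square H \cong H \square G$), and taking the minimum yields the claim. Let $S$ be a minimum zero forcing set of $G$, so $|S|=Z(G)$, and set $T := S \times V(H) \subseteq V(G \square H)$. I will show that $T$ is a zero forcing set of $G \square H$, which gives $Z(G \square H) \le |T| = Z(G)\cdot |V(H)|$.

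The strategy is to imitate, inside each ``horizontal copy'' $G \square \{v\}$ of the product, the forcing that blackens $G$ from $S$. Fix a forcing sequence $u_1 \to w_1,\, u_2 \to w_2,\, \ldots,\, u_N \to w_N$ that blackens $G$ starting from $S$; at step $t$, $u_t$ lies in $S \cup \{w_1,\ldots,w_{t-1}\}$ (hence is black) and $w_t$ is the unique white $G$-neighbor of $u_t$ at that moment. In the product I would process the forces in batches: for each $t$ from $1$ to $N$, and for each $v \in V(H)$, apply $(u_t, v) \to (w_t, v)$. The invariant to maintain, by induction on $t$, is that after completing batches $1,\ldots,t$ in the product, the set $(S \cup \{w_1,\ldots,w_t\}) \times V(H)$ is entirely black in $G \square H$.

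The inductive step is where the key structural point enters. Consider the prescribed force $(u_t, v) \to (w_t, v)$. The neighbors of $(u_t, v)$ in $G \square H$ split into horizontal neighbors $\{(u',v) : u' \in N_G(u_t)\}$ and vertical neighbors $\{(u_t, v') : v' \in N_H(v)\}$. Since $u_t \in S \cup \{w_1,\ldots,w_{t-1}\}$, the induction hypothesis puts the entire row $\{u_t\} \times V(H)$ inside the current black set, so every vertical neighbor of $(u_t,v)$ is already black. Among the horizontal neighbors, $(w_t,v)$ is the unique white one, because $w_t$ is by construction the unique white $G$-neighbor of $u_t$ at step $t$. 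Hence $(u_t,v)$ can force $(w_t,v)$; iterating through all batches blackens $V(G) \times V(H) = V(G \square H)$.

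I do not foresee a serious obstacle: the essence of the argument is simply the structural observation that cross-copy edges do not impede a ``horizontal'' force, precisely because the corresponding vertical neighbors sit in a row that has already been blackened in its entirety. The one point requiring care is the order of forces in the product — completing all $|V(H)|$ instances of batch $t$ before starting batch $t+1$ — which guarantees that, at the moment each force is applied, the row of the forcing vertex really is black, keeping the induction invariant intact.
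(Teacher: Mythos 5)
Your argument is correct and complete: the paper itself states this theorem as a cited result from the AIM work group and gives no proof, and your batched row-by-row simulation of a forcing sequence of $G$ inside each copy $G \square \{v\}$ is precisely the standard argument from that reference. The key point — that vertical neighbors of the forcing vertex lie in an already fully black row, so only the intended horizontal neighbor is white — is exactly the observation that makes the bound go through.
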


\begin{Thm} \label{ObsZ} Let $G$ be a connected graph of order $n \ge 2$. Then
\begin{itemize}
\item[(a)] \cite{dimZ, cutvertex} $Z(G)=1$ if and only if $G=P_n$,
\item[(b)] \cite{cutvertex} $Z(G)=2$ if and only if $G$ is a graph of two parallel paths,
\item[(c)] \cite{dimZ, cutvertex} $Z(G)=n-1$ if and only if $G=K_n$.
\end{itemize}
\end{Thm}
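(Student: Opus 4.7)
The plan is to treat the three parts separately, since they have quite different flavors: parts (a) and (c) are short consequences of the color-change rule, while part (b) requires a genuine structural argument and is where the main obstacle lies. Starting with part (a), the direction $G = P_n \Rightarrow Z(G) = 1$ is immediate, since an endpoint forces its unique neighbor, which then forces the next, and so on. For the converse, suppose $\{v\}$ is a zero forcing set. Since $v$ alone is black, it must make the first force and therefore has a unique white neighbor; but all of $v$'s neighbors are white, forcing $\deg(v) = 1$. Inductively, after $u_1 = v, u_2, \ldots, u_i$ have been blackened in that order, only $u_i$ still has a white neighbor, so $u_i$ must force next and have a unique white neighbor $u_{i+1}$. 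This unique-white-neighbor condition at every step rules out any edge $u_i u_k$ with $k \ge i + 2$; hence $G$ has only the edges $u_i u_{i+1}$, i.e., $G = P_n$.

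For part (c), $Z(K_n) \le n - 1$ is trivial, and when only $n - 2$ vertices of $K_n$ are black, every black vertex is adjacent to both white vertices and so cannot force; thus $Z(K_n) = n - 1$. Conversely, suppose $G \ne K_n$ is connected; I would produce a zero forcing set of size $n - 2$. Pick $u, v$ at distance $2$ with common neighbor $w$. If some neighbor $x$ of $u$ is not adjacent to $v$, take $S = V(G) \setminus \{u, v\}$; then $x$'s only white neighbor is $u$, so $x \to u$, after which any neighbor of $v$ forces $v$. Otherwise $N(u) = N(v)$; take $S = V(G) \setminus \{u, w\}$. Since $v \not\sim u$, $v$ has $w$ as its unique white neighbor, so $v \to w$ followed by $w \to u$.

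Part (b) is the main obstacle. The ``$\Leftarrow$'' direction takes as initial black set the two endpoints on one side of the parallel drawing, and the non-crossing of rungs ensures that propagation along each path is never obstructed by a white endpoint of a rung on the opposite path. For ``$\Rightarrow$'', Theorem \ref{pathcover}(a) combined with part (a) gives $P(G) = 2$ (if $P(G) = 1$ then $G$ admits an induced Hamilton path and is itself $P_n$, contradicting $Z(G) = 2$), so there are two vertex-disjoint induced paths $Q_1, Q_2$ covering $V(G)$, with all remaining edges of $G$ being rungs between them. The hard step is to orient $Q_1$ and $Q_2$ so that the rungs can be drawn without crossings. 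The plan is to exploit a minimum zero forcing set $\{a, b\}$ with $a \in Q_1$ and $b \in Q_2$ and track the two forcing frontiers; any crossing pair of rungs $a_i b_j$, $a_k b_\ell$ with $i < k$ and $\ell < j$ would, by a case analysis on the order in which the frontiers advance, force some vertex to have two white neighbors at its forcing step, contradicting the success of the forcing. Converting this local forcing obstruction into a global planar ``parallel paths'' drawing is the subtle combinatorial-to-geometric translation I anticipate as the main difficulty.
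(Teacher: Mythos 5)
The paper does not actually prove Theorem~\ref{ObsZ}; it is quoted from \cite{dimZ, cutvertex}, so there is no in-paper argument to compare against and your attempt has to stand on its own. Parts (a) and (c) do stand: the converse of (a) correctly exploits that at the moment $u_k$ forces $u_{k+1}$ every other neighbor of $u_k$ must already be black, which kills all chords $u_ku_j$ with $j\ge k+2$; and in (c) your second case already covers everything (for any nonadjacent $u,v$ at distance $2$ with common neighbor $w$, the set $V(G)\setminus\{u,w\}$ forces via $v\rightarrow w\rightarrow u$), so the first case is harmless but unnecessary.

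The genuine gap is in part (b), and it is exactly the step you flag but do not execute: deriving the non-crossing parallel drawing from $Z(G)=2$. Two remarks. First, the gap closes along the lines you propose, but you should argue it on the two \emph{forcing chains} of a minimum zero forcing set $\{a,b\}$ rather than on an arbitrary path cover realizing $P(G)=2$: the chains $a_1=a,\dots,a_p$ and $b_1=b,\dots,b_q$ partition $V(G)$, each is an induced path (a chord $a_ia_j$, $j\ge i+2$, would give $a_i$ two white neighbors when it forces), and they carry a canonical orientation. Writing $T(x)$ for the time $x$ turns black, a crossing pair of rungs $a_ib_j$ and $a_kb_l$ with $i<k$ and $l<j$ yields the cyclic chain $T(b_j)<T(a_{i+1})\le T(a_k)<T(b_{l+1})\le T(b_j)$ --- the first inequality because $b_j$ must be black before $a_i$ forces $a_{i+1}$, the third because $a_k$ must be black before $b_l$ forces $b_{l+1}$, and the other two from the chain orderings --- a contradiction. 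Once no crossing pair exists in these orientations, the ``combinatorial-to-geometric translation'' you worry about is routine: place $a_i$ at $(i,0)$ and $b_j$ at $(j,1)$; two straight rungs cross precisely when such a pair exists. Second, in the backward direction you only exhibit a forcing set of size $2$; you must also record $Z(G)\ge 2$, which follows from part (a) because a graph of two parallel paths is, by the definition adopted in the paper, not a simple path. As written, then, part (b) is a correct plan with its decisive inequality missing rather than a proof.
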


\begin{Thm} \cite{Z+e} 
Let $G$ be any graph. Then
\begin{itemize}
\item[(a)] For $v \in V(G)$, $Z(G)-1 \le Z(G-\{v\}) \le Z(G)+1$.
\item[(b)] For $e \in E(G)$, $Z(G)-1 \le Z(G-e) \le Z(G)+1$.
\end{itemize}
\end{Thm}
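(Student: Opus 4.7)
My plan for all four inequalities is a uniform ``replay with one pre-blackened vertex'' strategy. Starting from a minimum zero forcing set $S$ of one of the two graphs, I would obtain a candidate zero forcing set of the other by adding at most one carefully chosen vertex, then mimic the original forcing sequence step by step. The guiding principle is that pre-blackening a vertex never hurts, since a black vertex contributes nothing to any white-neighbor count; the added vertex is selected precisely so as to neutralize whichever neighborhood has been changed by the deletion.

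For part~(a), the lower bound $Z(G-\{v\})\ge Z(G)-1$ is immediate: if $S$ forces $G-\{v\}$, then $S\cup\{v\}$ forces $G$, because for every $a\ne v$ we have $N_G(a)=N_{G-\{v\}}(a)\cup(\{v\}\cap N_G(a))$ and $v$ is always black, so every force valid in $G-\{v\}$ remains valid in $G$. For the upper bound $Z(G-\{v\})\le Z(G)+1$ I would use the forcing-chain viewpoint: a minimum zero forcing set $S$ of $G$ induces a family of forcing chains that partition $V(G)$; deleting $v$ from its chain either shortens that chain (no new starter needed) or splits it into two (one new starter, namely the successor of $v$, is needed), so the number of chain starters grows by at most one. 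I would then replay $G$'s forcing sequence in $G-\{v\}$ with the step involving $v$ skipped, and verify by induction that the black set in $G-\{v\}$ always contains the $G$-black set minus $\{v\}$, which keeps every surviving force valid.

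For part~(b), write $e=uv$. For the upper bound $Z(G-e)\le Z(G)+1$, I would take a minimum zero forcing set $S$ of $G$ and let $w\in\{u,v\}$ be whichever endpoint of $e$ is blackened \emph{later} in $G$'s forcing sequence (ties broken arbitrarily, with the convention that elements of $S$ are blackened at step $0$). Set $S':=S\cup\{w\}$. When replaying $G$'s forces in $G-e$, only forces performed by $u$ or $v$ can be affected by the missing edge. If the forcer is $w$ itself, the other endpoint of $e$ is already black by the choice of $w$, so removing $e$ only erases a black neighbor and preserves the unique-white-neighbor property. If the forcer is the non-$w$ endpoint, then $w\in S'$ is pre-blackened, so again the removed edge leads to a black vertex. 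Forces whose target is $w$ are trivially skipped. The lower bound $Z(G)\le Z(G-e)+1$ is symmetric: I would start from a minimum zero forcing set of $G-e$, pre-blacken whichever endpoint of $e$ is blackened later in $G-e$'s own forcing sequence, and replay in $G$.

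The delicate point I expect to need care with is the bookkeeping invariant: after $i$ replayed steps, the black set in the modified graph should equal the black set of the original graph (possibly together with the pre-blackened vertex $w$). This is precisely what validates the assertion ``the partner of $e$ is already black whenever we need it,'' and it is the reason the \emph{later}-blackened endpoint must be chosen rather than an arbitrary one. Once the invariant is in place, the remainder reduces to a routine case analysis on the identity of the forcer at each step and on whether its target coincides with the pre-blackened vertex.
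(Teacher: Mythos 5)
This theorem appears in the paper only as a quoted result from \cite{Z+e}; the paper supplies no proof of it, so there is no internal argument to compare yours against. Judged on its own, your proposal is correct and is essentially the standard ``spread'' argument from that reference: the two lower bounds come from pre-blackening the deleted vertex (respectively, one endpoint of the deleted edge) and replaying the forcing process of the smaller graph, and the two upper bounds come from fixing a chronological list of forces for a minimum zero forcing set, adding the successor of $v$ in its forcing chain (respectively, the later-blackened endpoint of $e$), and replaying with the obstructed forces skipped. Two small points deserve explicit mention in a final write-up. First, in part~(b) you should record the observation that makes the choice of the \emph{later} endpoint $w$ work: if $u$ is the earlier-blackened endpoint and $u$ performs a force in $G$ whose target is not $v$, then $v$ must already be black at that moment (otherwise $u$ would have two white neighbors), so deleting the edge $uv$ only removes an already-black neighbor; the only force $u$ can perform before $v$ turns black is $u\rightarrow v$ itself, which is harmlessly skipped because $v=w$ is pre-blackened. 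Second, in part~(a) you should note that when $v$ is the initial vertex of its chain, the ``new starter'' (its successor) replaces $v$ in $S$ rather than augmenting it, so the number of chain starters still grows by at most one; and when $v$ is terminal no new starter is needed at all. With the bookkeeping invariant you state, every case closes and the argument is sound.
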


\begin{Thm}\cite{cutvertex} \label{cutV} 
Let $G$ be a graph with cut-vertex $v \in V(G)$. Let $W_1, W_2, \ldots, W_k$ be the vertex sets for the connected components of $\langle V(G)\setminus \{v\}\rangle$, and for $1\le i \le k$, let $G_i$ = $\langle W_i \cup \{v\}\rangle$. Then $Z(G) \ge  [\sum_{i=1}^{k} Z(G_i)]-k+1$.
\end{Thm}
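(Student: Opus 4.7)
The plan is to take a minimum zero forcing set $S$ of $G$ with $|S|=Z(G)$, and for each branch $G_i$ manufacture a zero forcing set of $G_i$ whose cardinality is controlled by $|S \cap W_i|$. A naive version of this idea yields only $Z(G) \ge \sum_i Z(G_i) - k$; to squeeze out the extra $+1$, I would split on whether $v \in S$.

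\textbf{Step 1 (Replay lemma, weak form).} Fix $i$ and set $S_i := (S \cap W_i) \cup \{v\}$. I would prove that $S_i$ is a zero forcing set of $G_i$ by running the $G$-forcing from $S$ and the $G_i$-forcing from $S_i$ in parallel and showing, by induction on the step $t$, that every vertex of $V(G_i)$ that is black at step $t$ in the $G$-process is also black at step $t$ in the $G_i$-process. The inductive step uses the cut-vertex property crucially: for any $u \in W_i$, $N_G(u) \subseteq W_i \cup \{v\} = V(G_i)$, so a force $u' \to u$ occurring in $G$ with $u \in W_i$ involves a forcer $u' \in V(G_i)$ whose white $G$-neighbors and white $G_i$-neighbors can only differ by vertices that, by the inductive hypothesis, have already turned black in $G_i$; hence the same force is available inside $G_i$. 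Forcing of $v$ itself is moot since $v \in S_i$. The outcome is $|S \cap W_i| + 1 \ge Z(G_i)$ for every $i$.

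\textbf{Step 2 (Case split on $v$).} If $v \in S$, then because $v \notin W_i$ for any $i$ we get
$|S| = \sum_i |S \cap W_i| + 1 \ge \sum_i (Z(G_i)-1) + 1 = \sum_i Z(G_i) - k + 1$, as required. If $v \notin S$, then $v$ must eventually be forced in the $G$-process; pick the index $j$ for which $v$'s forcer $u^*$ lies in $W_j$. I would then upgrade the replay argument in this one branch: the set $T := S \cap W_j$ (this time \emph{without} adjoining $v$) is itself a zero forcing set of $G_j$, because the replay induction now also validates the force $u^* \to v$ inside $G_j$ — at that step $u^* \in V(G_j)$ is black in $G_j$ by induction, and by the inductive hypothesis applied to its other neighbors, $v$ is the only $G_j$-white neighbor of $u^*$. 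This gives $|S \cap W_j| \ge Z(G_j)$, which combined with Step 1 on the remaining branches yields
\[
|S| = \sum_i |S \cap W_i| \;\ge\; Z(G_j) + \sum_{i \ne j} (Z(G_i) - 1) \;=\; \sum_i Z(G_i) - k + 1.
\]

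The main obstacle will be the replay induction itself: one must check that at every step where a color change occurs in $G$ at a vertex of $V(G_i)$, the ``only white neighbor'' clause of the color-change rule survives restriction to $G_i$. This reduces cleanly to $v$ being a cut-vertex, but it has to be stated as an induction tracking black/white status step-by-step in both processes, which requires a little care with the book-keeping. Once that induction is in place, both the weak and the strengthened forms of the replay lemma fall out, and the $+1$ improvement emerges directly from the case split on $v$.
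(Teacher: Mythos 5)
Your argument is correct, but note that there is nothing in this paper to compare it against: Theorem~\ref{cutV} is quoted from \cite{cutvertex} with a citation and no proof, so you have in effect supplied a self-contained proof of an imported result. The two ingredients you use are exactly the right ones. The replay lemma is sound because $v$ is a cut-vertex, so every neighbor of a vertex of $W_i$ lies in $V(G_i)$; hence for a force $u'\to u$ with $u\in W_i$ the forcer $u'$ lies in $V(G_i)$, and the inductive invariant (``black in $G$ at step $t$ implies black in $G_i$ at step $t$'') gives that the white $G_i$-neighbors of $u'$ form a subset of its white $G$-neighbors, so the force either replays in $G_i$ or is moot. The decisive point, which you identify correctly, is the case $v\notin S$: since $v$ is eventually forced, its forcer $u^*$ is a neighbor of $v$ and therefore lies in a unique $W_j$, and for \emph{that} $j$ the force $u^*\to v$ replays inside $G_j$, so $S\cap W_j$ alone (without adjoining $v$) forces $G_j$; for every other index the troublesome event ``$v$ is forced from outside $W_i$'' is neutralized by putting $v$ into $S_i$ from the start. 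This is precisely what converts the trivial bound $\sum_i Z(G_i)-k$ into $\sum_i Z(G_i)-k+1$, and the counting in both branches of the case split is right since $V(G)$ is partitioned by $\{v\}$ and the $W_i$. The only thing to be careful about in a written-up version is the bookkeeping you already flag: state the induction over rounds of simultaneous forces (or over a linearization of the forcing sequence), so that ``black at step $t$'' is unambiguous in both processes.
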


Next, we obtain general bounds for zero forcing number of functigraphs. If $G$ is a graph of order $2$, one can easily check that $Z(C(G, f))=2$ for any function $f$. So, we only consider a graph $G$ of order $n \ge 3$ for the rest of the paper. Notice that $V(G_1)$ forms a zero forcing set for a permutation graph $C(G,\sigma)$; this, together with Theorem \ref{mindegree}, implies the following

\begin{Crlr} \cite{lncs}\label{n}
Let $G$ be a graph of order $n \ge 3$, and let $\sigma: V(G_1) \rightarrow V(G_2)$ be a permutation. Then $1+ \delta(G) \le Z(C(G, \sigma)) \le n$.
\end{Crlr}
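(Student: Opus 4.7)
The plan is to establish the two inequalities separately; both follow from short direct arguments exploiting the bijectivity of $\sigma$, and indeed the statement is already claimed in the excerpt to be ``readily seen.''

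For the upper bound $Z(C(G,\sigma)) \le n$, I would show that the set $S = V(G_1)$ is itself a zero forcing set of $C(G,\sigma)$. Fix any $u \in V(G_1)$. The neighbors of $u$ in $C(G,\sigma)$ are its $\deg_G(u)$ neighbors inside $G_1$ (all black, since they lie in $S$) together with precisely one vertex in $V(G_2)$, namely $\sigma(u)$, because the only edges between $V(G_1)$ and $V(G_2)$ incident to $u$ are of the form $u\,\sigma(u)$. Hence $u$ has exactly one white neighbor and can force $\sigma(u)$. Since $\sigma$ is surjective, iterating this over all $u \in V(G_1)$ turns every vertex of $V(G_2)$ black, so $S$ is a zero forcing set of size $n$.

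For the lower bound $Z(C(G,\sigma)) \ge 1 + \delta(G)$, I would simply compute the minimum degree of $C(G,\sigma)$ and invoke Theorem~\ref{mindegree}. Any $u \in V(G_1)$ has $\deg_G(u)$ neighbors in $G_1$ and the single extra neighbor $\sigma(u) \in V(G_2)$, giving $\deg_{C(G,\sigma)}(u) = \deg_G(u) + 1$. Any $v \in V(G_2)$ has $\deg_G(v)$ neighbors in $G_2$ and, because $\sigma$ is a bijection, exactly one preimage $\sigma^{-1}(v) \in V(G_1)$, giving $\deg_{C(G,\sigma)}(v) = \deg_G(v) + 1$. Therefore $\delta(C(G,\sigma)) = \delta(G) + 1$, and Theorem~\ref{mindegree} yields $Z(C(G,\sigma)) \ge \delta(G) + 1$.

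There is no real obstacle here: the whole content of the corollary is the observation that bijectivity of $\sigma$ simultaneously (i) ensures each vertex in $V(G_1)$ has a unique neighbor across to $V(G_2)$ so that $V(G_1)$ itself forces, and (ii) guarantees every vertex of $C(G,\sigma)$ gains exactly one extra neighbor over its degree in $G$, so that the minimum degree bound of Theorem~\ref{mindegree} improves by one. The proof will essentially be these two one-paragraph observations.
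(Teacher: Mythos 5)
Your proposal is correct and follows exactly the paper's (implicit) argument: the paper derives the upper bound by noting that $V(G_1)$ is a zero forcing set of $C(G,\sigma)$ and the lower bound by combining Theorem~\ref{mindegree} with the fact that $\delta(C(G,\sigma))=\delta(G)+1$ for a permutation $\sigma$. Your write-up just makes both one-line observations explicit.
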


\begin{Thm}\label{functibounds}
Let $G$ be a graph of order $n \ge 3$, and let $f: V(G_1) \rightarrow V(G_2)$ be a function. Then $1+ \delta(G) \le Z(C(G, f)) \le 2n-2$. Both bounds are sharp.
\end{Thm}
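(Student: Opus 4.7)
The plan is to establish the upper and lower bounds by separate arguments and supply sharp examples for each.

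For the upper bound $Z(C(G,f)) \le 2n-2$, I would construct an explicit zero forcing set of that size. Since $f$ has nonempty domain, fix any $w \in f(V(G_1)) \subseteq V(G_2)$ and any preimage $u \in f^{-1}(w)$. Because $G$ is connected with $n \ge 3$, $\delta(G) \ge 1$, so $w$ has at least one neighbor $w' \in V(G_2)$ in $G_2$. Let $S = V(C(G,f)) \setminus \{w, w'\}$, so $|S| = 2n-2$. Then $u \in V(G_1) \subseteq S$ has all its $G_1$-neighbors in $S$ and exactly one $V(G_2)$-neighbor, namely $w$, which is white; so $u \rightarrow w$. Once $w$ is blackened, its remaining neighbors are its preimages $f^{-1}(w) \subseteq V(G_1) \subseteq S$ and its $G_2$-neighbors other than $w'$, all in $S$; so $w$'s only white neighbor is $w'$, and $w \rightarrow w'$.

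For the lower bound $Z(C(G,f)) \ge 1+\delta(G)$, I would apply the first-force argument. Let $S$ be a minimum ZFS and let $u \in S$ be the vertex that performs the first force, say $u \rightarrow y$. Then all other neighbors of $u$ lie in $S$, so $|S| \ge \deg_{C(G,f)}(u)$. If $u \in V(G_1)$, then $\deg_{C(G,f)}(u) = \deg_{G_1}(u) + 1 \ge \delta(G)+1$. If $u \in V(G_2)$ with $|f^{-1}(u)| \ge 1$, then $\deg_{C(G,f)}(u) \ge \deg_{G_2}(u) + 1 \ge \delta(G)+1$. The subtle case is $u \in V(G_2)$ with $|f^{-1}(u)| = 0$, which yields only $|S| \ge \delta(G)$. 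I would rule out equality as follows: if $|S| = \delta(G)$, then $\deg_{G_2}(u) = \delta(G)$ and $S = \{u\} \cup (N_{G_2}(u) \setminus \{y\}) \subseteq V(G_2)$, so after the first force the black set is exactly $N_{G_2}[u] \subseteq V(G_2)$. Since $V(G_1)$ is entirely white initially, the first $V(G_1)$-vertex to be forced, say $v$, must be forced by $x = f(v) \in V(G_2)$ with $|f^{-1}(x)| = 1$ and $N_{G_2}(x)$ entirely black at that moment, while any intermediate $V(G_2)$-internal force must be performed by a vertex with no preimages (as every preimage is white); tracking these joint constraints on the configuration at the time of the first cross-force yields a contradiction.

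The last case of the lower bound is the principal technical obstacle, since one must rule out a fairly intricate potential forcing process entirely within $V(G_2)$ that culminates in a valid cross-force. For sharpness of the lower bound, take $G = K_n$ with $\sigma$ the identity permutation; then $C(G,\sigma) \cong K_n \square K_2$ and $Z(K_n \square K_2) = n = 1 + \delta(K_n)$. For sharpness of the upper bound, take $G = K_3$ with $f$ mapping every vertex of $V(G_1)$ to a single vertex of $V(G_2)$: an exhaustive case analysis shows that no $S$ with $|S| \le 3$ is a ZFS, while $\{v_1, v_2, v_3, w_2\}$ is one, giving $Z(C(G,f)) = 4 = 2n-2$.
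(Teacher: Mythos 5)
Your upper bound argument (deleting one range vertex $w$ and one of its $G_2$-neighbors $w'$ and forcing $u\rightarrow w\rightarrow w'$) is correct and is a legitimate alternative to the paper's one-line appeal to the characterization $Z(H)=|V(H)|-1 \Leftrightarrow H$ complete. Your first-force analysis for the lower bound is also set up correctly, and the sharpness examples are fine (though for the upper bound you only exhibit sharpness at $n=3$, whereas the constant function on $K_n$ gives $2n-2$ for every $n$).

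However, the lower bound is not proved. The case you yourself identify as ``the principal technical obstacle'' --- a minimum set $S=\{u\}\cup(N_{G_2}(u)\setminus\{y\})\subseteq V(G_2)$ of size $\delta(G)$ built around a vertex $u\in V(G_2)\setminus Range(f)$ --- is dismissed with ``tracking these joint constraints $\ldots$ yields a contradiction,'' which is a placeholder, not an argument. That case is essentially the entire content of the theorem: the easy cases of the first-force argument only recover $Z\ge\delta(G)$, which is already Theorem~\ref{mindegree}, and ruling out a forcing process that lives inside $V(G_2)$ for a while and then crosses over is exactly where work is required. The paper closes this gap by observing that non-surjectivity of $f$ forces, by pigeonhole, a vertex $z\in V(G_2)$ with $|f^{-1}(z)|\ge 2$, and then proving a Claim that at most $\delta(G)-1$ vertices of $G_1$ can ever be blackened: any vertex of $G_2$ lying on a path toward $z$ that has already fired into $G_1$ retains at least two white neighbors (one in $G_1$ and one in $G_2$ along the path to $z$), so the cross-forces cannot propagate far enough, and since every vertex of $G_1$ has degree at least $\delta(G)+1$ in $C(G,f)$, the copy $G_1$ can never be completed. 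Without an argument of this kind (or some substitute), your proof establishes only $Z(C(G,f))\ge\delta(G)$ in the hard case, and the stated bound $1+\delta(G)$ remains unproven.
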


\begin{proof}
First, noting that $C(G, f) \not\cong K_{2n}$ for any function $f$, $Z(C(G, f)) \le 2n-2$ by (c) of Theorem~\ref{ObsZ}. Next, we show that $Z(C(G, f)) \ge 1+ \delta(G)$. If $Range(f)=V(G_2)$, the result follows by Corollary~\ref{n}. So, we consider $Range(f) \subsetneq V(G_1)$. Assume, to the contrary, that $Z(C(G,f)) \le \delta(G)$. Then $Z(C(G, f))=\delta(G)$ by Theorem \ref{mindegree}, since $\delta(C(G,f)) \ge \delta(G)$. Let $S$ be a zero forcing set for $C(G, f)$ with $|S|=\delta(G)=\delta$. Then $S$ must contain a vertex $v \in V(G_2) \setminus Range(f)$ satisfying $\deg_{C(G,f)}(v)=\deg_{G_2}(v)=\delta(G)$, along with all but one vertex in $N_{C(G, f)}(v)$. For each $1 \le i \le \delta$, let $F_i: w_{i,1} \rightarrow w_{i,2} \rightarrow \ldots \rightarrow w_{i, k_i}$ be a forcing chain consisting of $k_i$ vertices; notice that $S=\{w_{1, 1}, w_{2,1}, \ldots, w_{\delta, 1}\}$ is a zero forcing set for $C(G, f)$ and each vertex in $C(G,f)$ must appear in a forcing chain. We make the following 

\vspace{.1in}

\emph{Claim:} At most $(\delta-1)$ vertices in $G_1$ are turned black after applying the color-change rule on $S$ as long as possible.

\vspace{.1in}

\textit{Proof of Claim:} Assume, to the contrary, that $\delta$ vertices in $G_1$ are turned black. Let $B=\{w_{1, j_1}, w_{2, j_2}, \ldots, w_{\delta, j_{\delta}} \}$ be the first $\delta$ vertices in $G_1$ that are turned black. Since $C(G, f)$ is not a permutation graph, there exists a vertex $z \in V(G_2)$ such that $|f^{-1}(z)| \ge 2$. Notice that $z \not\in \cup_{i=1}^{\delta}\{w_{i, 1}, w_{i, 2}, \ldots, w_{i, j_{i}}\}$; otherwise, once $z$ is turned black, $z$ has at least two white neighbors in $G_1$, and thus at most $(\delta-1)$ vertices in $G_1$ are turned black. Since there exists a $z-w_{\ell,\alpha}$ path (a path connecting $z$ and $w_{\ell, \alpha}$) in $G_2$ such that $w_{\ell, \alpha}$ is adjacent to a vertex in $B$, for some $\ell$ ($1 \le \ell \le \delta$) and for $w_{\ell, \alpha} \in \cup_{i=1}^{\delta}\{w_{i, 1}, w_{i, 2}, \ldots, w_{i, j_{i}}\}$, even after $w_{\ell, \alpha}$ is turned black, $w_{\ell, \alpha}$  has at least two white neighbors, $f^{-1}(w_{\ell, \alpha}) \in B$ and a white neighbor in $G_2$ along a $z-w_{\ell, \alpha}$ path; thus,  it is impossible that $\delta$ vertices in $G_1$ are turned black. $\Box$

\vspace{.1in}

Since each vertex in $G_1$ has degree at least $\delta+1$, by the Claim, no vertex in $G_1$ can force. So, $S$ fails to be a zero forcing set for $C(G,f)$ with $|S|=\delta$, and thus $Z(C(G,f)) \ge \delta(G)+1$.\\

For the sharpness of the lower bound, take $G=K_n$ with $f=\sigma$ a permutation; then $\delta(K_n)=n-1$ and $Z(C(K_n, \sigma))=n$ (see Proposition~\ref{perK}). For the sharpness of the upper bound, take $G=K_n$ with $f=f_0$ a constant function; then $Z(C(K_n, f_0))=2n-2$ (see Proposition~\ref{constantK}).~\hfill 
\end{proof}

%%%%%%%%%%%%%%%%%%%%%%%%%%%
%%%%%%%%%%%%%%%%%%%%%%%%%%%

\section{Examples on $Z(G)$ versus $Z(C(G, f))$}

In this section, we give examples of functigraphs showing that there does not exist a function $g$ such that, for every pair $(G,f)$, $Z(G)<g(Z(C(G,f)))$ or $g(Z(G))>Z(C(G,f))$. In \cite{lncs}, examples of permutation graphs showing that $|Z(G)-Z(C(G,f))|$ can be arbitrarily large were given. Here, we first give an example of non-permutation functigraph showing that $Z(G)-Z(C(G,f))$ can be arbitrarily large.

\begin{Rem} 
There exists a functigraph such that $Z(G)-Z(C(G,f))$ can be arbitrarily large (see Figure \ref{Zdown2}); notice that $Z(G)=5k$ by (b) of Theorem \ref{pathcover}, and $Z(C(G, f)) \le 4k$ since the solid vertices in Figure \ref{Zdown2} form a zero forcing set for $C(G,f)$.
\end{Rem}

\begin{figure}[htpb]
\begin{center}
\scalebox{0.38}{\input{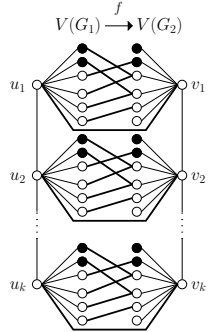}} \caption{An example showing that $Z(G)-Z(C(G,f))$ can be arbitrarily large}\label{Zdown2}
\end{center}
\end{figure}

Next, we give examples of functigraphs showing that there does not exist a function $g$ such that, for every pair $(G,f)$, $Z(G)<g(Z(C(G,f)))$ or $g(Z(G))>Z(C(G,f))$.

\begin{Rem} \label{refereeQ1}
There does not exist a function $g$ such that $Z(G)<g(Z(C(G,f)))$ for every pair $(G, f)$. Let $G$ be the graph in Figure \ref{ZPerdownBig} (a bouquet of $k$ circles), $V(G_1)=\{u_i \mid 0 \le i \le 2k\}$, $V(G_2)=\{v_i \mid 0 \le i \le 2k\}$, where $k \ge 3$. Let $f: V(G_1) \rightarrow V(G_2)$ be defined by $f(u_j)=v_j$ for $j \in \{0,1,2k\}$, $f(u_{2i})=v_{2i+1}$ for $1 \le i \le k-1$, and $f(u_{2i+1})=v_{2i}$ for $1 \le i \le k-1$. Note that $Z(G)=k+1$: $Z(G) \ge P(G)=k+1$ by (a) of Theorem \ref{pathcover}, and $Z(G) \le k+1$ since $\{u_0\} \cup \{u_{2i} \mid 1 \le i \le k\}$ is a zero forcing set for $G$. On the other hand, $Z(C(G,f)) \le 4$ since the solid vertices of Figure \ref{ZPerdownBig} form a zero forcing set for $C(G,f)$.
\end{Rem}

\begin{figure}[htpb]
\begin{center}
\scalebox{0.38}{\input{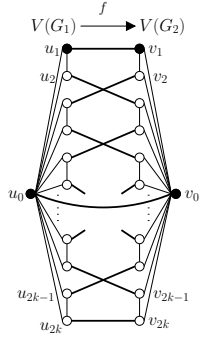}} \caption{An example showing that there does not exist a function $g$ such that $Z(G)<g(Z(C(G,f)))$ for every pair $(G,f)$}\label{ZPerdownBig}
\end{center}
\end{figure}

\begin{Rem} \label{refereeQ2}
There does not exist a function $g$ such that $g(Z(G))>Z(C(G,f))$ for every pair $(G,f)$. Let $G=P_{4k}$, $V(G_1)=\{u_i \mid 1 \le i \le 4k\}$, and $V(G_2)=\{v_i \mid 1 \le i \le 4k\}$, where $k \ge 2$. Let $f: V(G_1) \rightarrow V(G_2)$ be defined by $f(u_{2i-1})=v_{2i}$ and $f(u_{2i})=v_{2i-1}$, where $1 \le i \le 2k$ (see Figure \ref{refQ2}). Notice that $Z(G)=1$ by (a) of Theorem \ref{ObsZ}. On the other hand, $Z(C(G, f)) \ge k+1$ since at least a vertex in each $B_i=\{u_{4i-1}, u_{4i}, u_{4i+1}, u_{4i+2}, v_{4i-1}, v_{4i}, v_{4i+1}, v_{4i+2}\}$ ($1 \le i \le k-1$) and at least a vertex in each $\{u_j, u_{j+1}, v_j, v_{j+1}\}$ ($j=1, 4k-1$) must belong to a zero forcing set of $C(G,f)$; otherwise, a vertex in $B_i \setminus \{u_{4i-1}, u_{4i+2}, v_{4i-1}, v_{4i+2}\}$ or a vertex in $\{u_1, u_{4k}, v_1, v_{4k}\}$ fails to turn black, after applying the color-change rule as long as possible. 
\end{Rem}

\begin{figure}[htpb]
\begin{center}
\scalebox{0.38}{\input{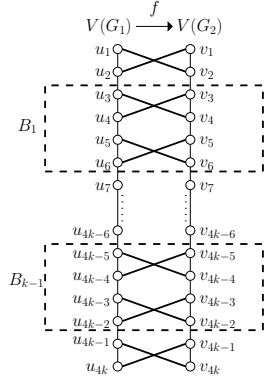}} \caption{An example showing that there does not exist a function $g$ such that $g(Z(G))>Z(C(G,f))$ for every pair $(G,f)$}\label{refQ2}
\end{center}
\end{figure}

In contrast to the above examples, as an immediate consequence of Theorem \ref{idupper}, we have the following

\begin{Crlr}\label{identityupper}
For any graph $G$ of order $n \ge 3$, $Z(C(G, id)) \le \min \{2Z(G), n\}$.
\end{Crlr}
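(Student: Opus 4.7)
The plan is to observe that $C(G, id)$ coincides with the Cartesian product $G \square K_2$, and then read off the bound from Theorem \ref{idupper} applied with $H = K_2$.

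First I would verify the isomorphism $C(G, id) \cong G \square K_2$. The vertex set of $C(G, id)$ is $V(G_1) \cup V(G_2)$, with edges coming from the two disjoint copies of $G$ together with edges $\{u \, f(u) \mid u \in V(G_1)\}$. Since $f = id$, these extra edges are precisely the ``matching'' edges joining each vertex of $G_1$ to its copy in $G_2$, which is exactly the recipe for $G \square K_2$.

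Next, I would invoke Theorem \ref{idupper} with $H = K_2$. This gives
\[
Z(C(G, id)) \;=\; Z(G \square K_2) \;\le\; \min\{\,Z(G)\,|V(K_2)|,\; Z(K_2)\,|V(G)|\,\}.
\]
Since $K_2 = P_2$, part (a) of Theorem \ref{ObsZ} yields $Z(K_2) = 1$, and $|V(K_2)| = 2$, $|V(G)| = n$, so the right-hand side becomes $\min\{2Z(G), n\}$, completing the argument.

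There is no substantive obstacle here; the only thing to be careful about is the identification $C(G, id) \cong G \square K_2$, which is immediate from the definitions. Everything else is a direct substitution into Theorem \ref{idupper}.
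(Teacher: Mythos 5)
Your proposal is correct and matches the paper's intent exactly: the corollary is stated there as an immediate consequence of Theorem \ref{idupper}, via the identification $C(G, id) \cong G \square K_2$ together with $Z(K_2)=1$ and $|V(K_2)|=2$. Nothing further is needed.
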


Further, we make the folllowing

\begin{Conj}
For any graph $G$ of order $n \ge 3$, $Z(C(G, id)) \ge Z(G)+1$.
\end{Conj}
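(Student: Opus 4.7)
My plan is to use the natural projection $\pi:V(C(G,id))\to V(G)$ that identifies $u_i\in V(G_1)$ with its $id$-image $v_i\in V(G_2)$; note that $C(G,id)=G\square K_2$. Let $S$ be a minimum zero forcing set of $G\square K_2$.

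First I would prove the \emph{projection lemma}: $\pi(S)$ is a zero forcing set of $G$. This is shown by simulating the color-change process step by step. A horizontal force $u_i\to u_j$ in $G\square K_2$ requires the vertical partner $v_i$ to be black, so in the projected process on $G$ every $G$-neighbor of $u_i$ other than $u_j$ already lies in $\pi(B)$ (because $v_{j'}\in B$ implies $u_{j'}=\pi(v_{j'})\in\pi(B)$); hence $u_i\to u_j$ is a legal force in $G$ unless $u_j\in\pi(B)$, in which case the step is a no-op. Vertical forces $u_i\to v_i$ are always no-ops under $\pi$, since $\pi(u_i)=\pi(v_i)$. Consequently $|\pi(S)|\ge Z(G)$, and if $|S|>|\pi(S)|$, then $|S|\ge|\pi(S)|+1\ge Z(G)+1$ and we are done.

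The remaining (hard) case is $|S|=|\pi(S)|$, i.e., $S$ contains no ``vertical pair'' $\{u_i,v_i\}$. Assume for contradiction that $|S|=Z(G)$, so $\pi(S)$ is a minimum zero forcing set of $G$. Analyze the first color change in $G\square K_2$ from $S$: if the forcer $x\in S$ lies in $V(G_1)$, say $x=u_i$, then since $v_i\notin S$ the partner $v_i$ is white, so $v_i$ must be the unique white neighbor of $u_i$, forcing every $u_j$ with $u_iu_j\in E(G)$ to lie in $S\cap V(G_1)$. Hence $N_G[u_i]\subseteq\pi(S)$ (and symmetrically if $x\in V(G_2)$). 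Thus the minimum ZFS $\pi(S)$ of $G$ must contain the closed $G$-neighborhood of at least one of its own vertices.

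The step I expect to be the main obstacle is the following auxiliary lemma, which would finish the proof: \emph{if $T$ is a minimum zero forcing set of $G$ and $v\in T$, then $N_G[v]\not\subseteq T$.} The heuristic is that if $N_G[v]\subseteq T$ then $v$ has no white neighbor at any stage of the forcing from $T$, so $v$'s forcing chain is trivial of length one and $v$ is ``wasted''; one hopes to reroute a neighboring forcing chain to pass through $v$ and thus produce a ZFS of size $|T|-1$. Such a rerouting is delicate because the analogous statement fails for non-minimum ZFS's (for instance, in $C_4$ the set $\{a,b,d\}$ is a ZFS containing $N_{C_4}[a]$, but $\{b,d\}$ is not a ZFS), so the minimality of $T$ must enter the argument in an essential way. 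A backup plan, should the auxiliary lemma prove elusive, is to push the structural analysis above to the second and later forcing steps in $G\square K_2$, extracting enough constraints on $S$ (successive closed neighborhoods contained in $S\cap V(G_1)$ or $S\cap V(G_2)$) to contradict $|S|=Z(G)$ directly.
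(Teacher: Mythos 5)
The paper offers no proof of this statement---it is posed there as an open conjecture---so there is nothing on the paper's side to compare against; I can only assess your argument on its own terms. Your projection lemma is correct: $C(G,id)=G\square K_2$, and projecting a forcing process of $G\square K_2$ onto $G$ turns every horizontal force into a legal force or a no-op (the forcer's in-layer neighbors other than the target are already black, hence their projections are black) and every vertical force into a no-op. Your first-force analysis in the remaining case $|S|=|\pi(S)|=Z(G)$ is also correct: the first forcer $u_i$ sees its vertical partner $v_i$ white, so $v_i$ must be its unique white neighbor, whence $N_G[\pi(u_i)]\subseteq\pi(S)$. The genuine gap is exactly where you flag it: the auxiliary lemma is asserted, not proved, and the route you sketch toward it---delete $v$ from $T$ and reroute a neighboring forcing chain through $v$---is the wrong move. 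As your own $C_4$ example shows, deleting $v$ can stall the process, and it is not clear how minimality of $T$ would rescue that particular deletion.

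The lemma is nonetheless true, and the fix is to delete a \emph{neighbor} of $v$ rather than $v$ itself; no appeal to minimality or to chain surgery is needed. Suppose $T$ is any zero forcing set of $G$ with $N_G[v]\subseteq T$, and pick $u\in N_G(v)$ (which exists since $G$ is connected of order $n\ge 3$). In $T\setminus\{u\}$ the vertex $v$ is black and every neighbor of $v$ other than $u$ is black, so $u$ is the unique white neighbor of $v$ and $v\rightarrow u$ is a legal first force; after it the black set is exactly $T$, which by hypothesis forces all of $V(G)$. Hence $T\setminus\{u\}$ is a zero forcing set of size $|T|-1$, so no \emph{minimum} zero forcing set can contain the closed neighborhood of one of its vertices. (The $C_4$ instance is consistent with this: $\{a,b,d\}=N[a]$ is a zero forcing set, and deleting the neighbor $b$ leaves $\{a,d\}$, which \emph{is} a zero forcing set of $C_4$.) With this substitution your three steps assemble into a complete proof that $Z(C(G,id))\ge Z(G)+1$; since this would settle a question the authors leave open, you should write out and independently verify each step with care.
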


%%%%%%%%%%%%%%%%%%%%%%%%%%%%%%%%%%%%%%%%%%%%%%%%%%
%%%%%%%%%%%%%%%%%%%%%%%%%%%%%%%%%%%%%%%%%%%%%%%%%%

\section{Zero Forcing Number of Functigraphs on Complete Graphs}

In this section, for $n \ge 3$, we show that 1) $Z(C(K_n, \sigma))=n$ for a permutation $\sigma$; 2) $Z(C(K_n, f))=2n-1-|Range(f)|$ for a function $f$ with $1 \le |Range(f)|<n$. Throughout this section, we let $V(G_1)=\{u_i \mid 1 \le i \le n\}$ and $V(G_2)=\{v_i \mid 1 \le i \le n\}$ for $G_1 \cong G_2 \cong K_n$. 

\begin{Prop}\label{perK}
Let $G=K_n$ be the complete graph of order $n \ge 3$, and let $\sigma: V(G_1) \rightarrow V(G_2)$ be a permutation. Then $Z(C(K_n, \sigma))=n$.
\end{Prop}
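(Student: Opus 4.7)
The proof is essentially immediate from Corollary \ref{n} once we observe what the bounds specialize to for $G = K_n$. Since $\delta(K_n) = n-1$, the lower bound in Corollary \ref{n} gives $Z(C(K_n,\sigma)) \ge 1 + \delta(K_n) = n$, and the upper bound gives $Z(C(K_n,\sigma)) \le n$. The two inequalities together force equality. So my plan is simply to cite Corollary \ref{n} and carry out this arithmetic.

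For completeness, and to make the paper self-contained at this point, I might also give the one-line argument behind each bound directly, rather than just invoke the corollary. For the upper bound: $V(G_1)$ of size $n$ is a zero forcing set, because under a permutation $\sigma$ each black vertex $u_i \in V(G_1)$ has a unique neighbor outside $V(G_1)$, namely $\sigma(u_i)$, so every $u_i$ can simultaneously force $\sigma(u_i)$ and thereby blacken all of $V(G_2)$. For the lower bound: the functigraph $C(K_n,\sigma)$ has minimum degree exactly $n$ (each vertex already has degree $n-1$ inside its copy of $K_n$, plus one edge across under the permutation), so Theorem \ref{mindegree} yields $Z(C(K_n,\sigma)) \ge \delta(C(K_n,\sigma)) = n$.

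There is no real obstacle in this proof — the complete graph saturates both sides of Corollary \ref{n} simultaneously, so nothing further needs to be shown. The only thing to be careful about is making the statement of the upper-bound construction explicit enough that the reader can verify it (namely that, because $\sigma$ is a bijection, each vertex of $V(G_1)$ has exactly one cross-edge and hence exactly one white neighbor once $V(G_1)$ is blackened).
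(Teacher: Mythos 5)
Your proposal is correct and matches the paper's proof exactly: the paper also just notes $\delta(K_n)=n-1$ and invokes Corollary~\ref{n}, whose two bounds coincide at $n$. Your optional unpacking of both bounds (the cross-edges under a bijection giving each $u_i$ a unique white neighbor, and $\delta(C(K_n,\sigma))=n$ for the lower bound) is accurate but not needed.
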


\begin{proof}
Since $\delta(K_n)=n-1$, $Z(C(K_n, \sigma)) = n$ by Corollary \ref{n}. \hfill 
\end{proof}

\begin{Prop}\label{constantK}
Let $G=K_n$ be the complete graph of order $n \ge 3$, and let $f_0: V(G_1) \rightarrow V(G_2)$ be a constant function. Then $Z(C(K_n, f_0))=2n-2$.
\end{Prop}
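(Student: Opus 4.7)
The plan is to sandwich $Z(C(K_n, f_0))$ between $2n-2$ and $2n-2$ by proving each inequality separately. The upper bound is essentially free: the functigraph $C(K_n,f_0)$ has $2n$ vertices and is manifestly not $K_{2n}$ (for instance, if $f_0 \equiv v_1$, then $u_1$ is not adjacent to $v_2$), so by part (c) of Theorem~\ref{ObsZ} we immediately get $Z(C(K_n,f_0)) \le 2n-2$.

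For the lower bound, the key structural observation is that, after relabeling so that $f_0(u_i) = v_1$ for all $i$, the vertex $v_1$ is a cut-vertex of $C(K_n,f_0)$. Indeed, the only edges between $V(G_1)$ and $V(G_2)$ are the $n$ edges $u_i v_1$, so deleting $v_1$ leaves two connected components: $V(G_1)$, which induces $K_n$, and $V(G_2)\setminus\{v_1\}$, which induces $K_{n-1}$. This is exactly the hypothesis of Theorem~\ref{cutV} with $k=2$, so the plan is to apply it directly.

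Identifying the two augmented subgraphs is the main computation. With $W_1 = V(G_1)$, the subgraph $\langle W_1 \cup \{v_1\}\rangle$ consists of the clique $V(G_1)$ together with $v_1$ adjacent to every $u_i$, so it is $K_{n+1}$; part (c) of Theorem~\ref{ObsZ} then gives its zero forcing number as $n$. With $W_2 = V(G_2)\setminus\{v_1\}$, the subgraph $\langle W_2 \cup \{v_1\}\rangle$ is just $\langle V(G_2)\rangle = K_n$, whose zero forcing number is $n-1$. Plugging these into Theorem~\ref{cutV} yields
\[
Z(C(K_n,f_0)) \;\ge\; n + (n-1) - 2 + 1 \;=\; 2n - 2,
\]
which matches the upper bound and completes the proof.

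I do not anticipate a serious obstacle: once one notices that a constant function collapses the cross-edges to a single ``hub'' vertex, the cut-vertex structure is transparent and the clique calculations are immediate. The only minor subtlety is that the relabeling depends on the single value $v_1 \in \mathrm{Range}(f_0)$, but this is a genuine WLOG since the functigraph is determined up to isomorphism by the cardinality of $\mathrm{Range}(f_0)$ when $f_0$ is constant.
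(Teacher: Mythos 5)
Your proof is correct and follows essentially the same route as the paper: both obtain the upper bound from the fact that $C(K_n,f_0)\not\cong K_{2n}$ via part (c) of Theorem~\ref{ObsZ}, and both get the lower bound by viewing $v_1$ as a cut-vertex and applying Theorem~\ref{cutV} to the pieces $K_{n+1}$ and $K_n$. The only cosmetic difference is that the paper cites Theorem~\ref{functibounds} for the upper bound, which itself rests on the same observation you make directly.
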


\begin{proof}
Without loss of generality, we may assume that $f_0(u_i)=v_1$ for each $i$ ($1 \le i \le n$). If we let $H=\langle V(G_1) \cup \{v_1\}\rangle$, then $H \cong K_{n+1}$; one can view $C(K_n, f_0)$ as $H$ and $G_2$ being joined at the cut-vertex $v_1$. By (c) of Theorem \ref{ObsZ}, $Z(H)=n$ and $Z(G_2)=n-1$. By Theorem \ref{cutV}, $Z(C(K_n, f_0)) \ge 2n-2$. By Theorem \ref{functibounds}, $Z(C(K_n,f_0)) \le 2n-2$. Thus, $Z(C(K_n, f_0)) = 2n-2$.~\hfill
\end{proof}

\begin{Rem}
Theorem \ref{constantK} may be generalized as follows. For $m, n \ge 3$, let $H_1=K_m$ and $H_2=K_n$. Let $\mathcal{G}=(V,E)$ be the graph with $V=V(H_1) \cup V(H_2)$ and $E=E(H_1) \cup E(H_2) \cup \{u_iv_1 \mid 1 \le i \le m \}$. Then $Z(\mathcal{G})=m+n-2$.
\end{Rem}

\begin{Thm}
Let $G=K_n$ be the complete graph of order $n \ge 3$, and let $|Range(f)|=s$ where $1 < s <  n$. Then $Z(C(K_n, f)) = 2n-s-1$. 
\end{Thm}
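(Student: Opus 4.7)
The plan is to exhibit a zero forcing set of cardinality $2n-s-1$. Assuming without loss of generality that $Range(f)=\{v_1,\ldots,v_s\}$, I take
$$S := V(G_1)\cup\{v_{s+1},\ldots,v_{n-1}\},$$
of size $n+(n-1-s)=2n-s-1$. The forcing would proceed in two phases: first, each $u_k\in V(G_1)$ has $f(u_k)$ as its unique $V(G_2)$-neighbor, so while all of $V(G_1)\setminus\{u_k\}$ is black, some $u_k$ whose image is still white performs $u_k\to f(u_k)$; iterating blackens all of $Range(f)$. Second, only $v_n$ remains white, and any black $v\in V(G_2)$ has $v_n$ as its unique white neighbor (since $f^{-1}(v)\subseteq V(G_1)$ is already black), yielding $v\to v_n$.

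\textbf{Lower bound.} I will invoke the standard fort characterization: $S$ is zero forcing iff $V\setminus S$ contains no \emph{fort}, where a fort is a nonempty $F\subseteq V$ such that every $x\notin F$ has either $0$ or at least $2$ neighbors in $F$. It therefore suffices to show that every $W\subseteq V(C(K_n,f))$ with $|W|\ge s+2$ contains a fort. Partition $W=W_1\sqcup W_2\sqcup W_3$ along $V(G_1)\sqcup Range(f)\sqcup (V(G_2)\setminus Range(f))$, and split into three regimes. If $|W_3|\ge 2$, then $W_3$ is itself a fort: vertices of $V(G_1)$ have images in $Range(f)$ and hence no neighbor in $W_3$, while vertices of $V(G_2)\setminus W_3$ are adjacent in $K_n$ to every element of $W_3$. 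Otherwise $|W_3|\le 1$; if additionally $|W_2|+|W_3|\ge 2$, I claim $F=W$ is a fort: condition~2 follows from $|W\cap V(G_2)|\ge 2$, and condition~1 requires handling only the marginal $|W_1|=1$ case, in which $|W|\ge s+2$ together with $|W_2|\le s$ and $|W_3|\le 1$ forces $|W_2|=s$, so $W_2=Range(f)$ and $f(x)\in W_2$ becomes automatic. Finally, if $|W_2|+|W_3|\le 1$, then $|W_1|\ge s+1>|Range(f)|$, so pigeonhole produces some $v\in Range(f)$ with $|f^{-1}(v)\cap W_1|\ge 2$, and the set
$$F := \{u\in W_1 : |f^{-1}(f(u))\cap W_1|\ge 2\}$$
satisfies $|F|\ge 2$ and, by construction, $|f^{-1}(v)\cap F|\in\{0\}\cup\{2,3,\ldots\}$ for every $v\in Range(f)$, hence is a fort sitting inside $V(G_1)$.

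\textbf{Main obstacle.} The chief difficulty is the lower bound's case analysis. Each individual fort verification reduces to a brief check from the definition, but partitioning the parameter space $(|W_1|,|W_2|,|W_3|)$ correctly is subtle: the simple $F=W$ construction just barely fails when the ``$V(G_2)$-weight'' $|W_2|+|W_3|$ collapses to at most one (at which point the pigeonhole-based $F\subseteq W_1$ construction must take over), while the borderline $|W_1|=1$ sub-configuration only works because the arithmetic $|W|\ge s+2$, $|W_2|\le s$, $|W_3|\le 1$ pins $W_2$ to all of $Range(f)$. Confirming that these three regimes exhaust the possibilities, and that the fort conditions do hold in each, is the delicate part of the argument; the rest is routine.
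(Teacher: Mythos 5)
Your argument is correct, and while your upper bound coincides with the paper's (the set $V(G_1)\cup\{v_{s+1},\dots,v_{n-1}\}$ with the same two-phase forcing), your lower bound takes a genuinely different route. The paper reasons directly about the forcing dynamics: it first notes that at least $n-s-1$ vertices of $V(G_2)\setminus Range(f)$ must lie in any zero forcing set $S$, then proves two claims about the fibers of $f$ --- that $S$ contains all but at most one vertex of each fiber $f^{-1}(v_i)$, and that at most one index $j$ can have $|(\{v_j\}\cup f^{-1}(v_j))\setminus S|>1$ --- and finishes with a case split on whether such a $j$ exists. You instead use the fort obstruction (a nonempty set disjoint from $S$ in which every outside vertex has $0$ or at least $2$ neighbors can never be infected) and show every $(s+2)$-subset contains a fort via the trichotomy on $(|W_1|,|W_2|,|W_3|)$. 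I checked the three regimes: they are exhaustive; the borderline subcase $|W_1|=1$ is correctly resolved (and $|W_1|=0$ cannot occur there, since $|W_2|\le s$ and $|W_3|\le 1$ would force $|W|\le s+1$); and the pigeonhole fort in the last regime satisfies the $0$-or-$\ge 2$ condition fiber by fiber because membership in $F$ is decided fiberwise. What your approach buys is a static, purely combinatorial verification in place of reasoning about the order in which vertices are forced; what it costs is reliance on the fort characterization, which is not stated in this paper (and postdates it), so you should either cite it or include the short proof of the one direction you actually use, namely that a fort disjoint from $S$ witnesses that $S$ is not a zero forcing set --- the full equivalence is not needed.
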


\begin{proof}
Let $W=Range(f)$ with $|W|=s$, where $2 \le s \le n-1$. Without loss of generality, we may assume that $W=\{v_1, v_2, \ldots, v_s\}$ such that $|f^{-1}(v_i)|=k_i$ and $\sum_{i=1}^{s}k_i=n$. Further, we may assume that $f^{-1}(v_1)=\{u_i \mid 1 \le i \le k_1\}$, $f^{-1}(v_2)=\{u_i \mid k_1+1 \le i \le k_1+k_2\}, \ldots$, and $f^{-1}(v_s)=\{u_i \mid 1+ \sum_{t=1}^{s-1} k_{t} \le i \le \sum_{t=1}^{s} k_{t}\}$; we adopt the convention that $\sum_{i=a}^{b} f(i)=0$ when $b<a$. Let $S$ be a zero forcing set for $C(K_n,f)$. Note that $S=V(G_1) \cup \{v_i \mid s+1 \le i \le n-1\}$ is a zero forcing set for $C(K_n, f)$ with $|S|=n+(n-1-s)=2n-s-1$: (i) $u_{1+ \sum_{t=1}^{i-1}k_t} \rightarrow v_i$ for $1 \le i \le s$; (ii) $v_1 \rightarrow v_n$. So, $Z(C(K_n, f)) \le 2n-s-1$.\\

Next, we show that $Z(C(K_n, f)) \ge 2n-s-1$. Notice that no vertex in $V(G_2) \setminus W$ can force any vertex in $G_1$, and no vertex in $G_1$ can force any vertex in $V(G_2) \setminus W$. First, note that at least $(n-s-1)$ vertices of $V(G_2) \setminus W$ must belong to $S$; otherwise, even after all vertices of $W$ are turned black, each vertex in $G_2$ has at least two white neighbors in $G_2$, and hence it is impossible to turn the entire vertex set of $G_2$ black. Second, we make the following\\

\emph{Claim 1. For each $i$ ($1 \le i \le s$), $|S \cap f^{-1}(v_i)| \ge k_i-1$.}

\vspace{.1in} 

\emph{Proof of Claim 1:} Assume, to the contrary, that $|S \cap f^{-1}(v_i)| \le k_i-2$ for some $k_i \ge 2$ (there exists a $k_i \ge 2$ since $s <n$), where $1 \le i \le s$. Since $v_i$ has at least two white neighbors in $G_1$, $v_i$ cannot force at all. Further, even after all vertices in $V(G_1) \setminus f^{-1}(v_i)$ are turned black, noting that each vertex of $G_1$ has at least two white neighbors in $G_1$, it is impossible to turn the entire vertex set of $G_1$ black. This contradicts the assumption that $S$ is a zero forcing set. $\Box$

\vspace{.1in}
 
Third, we make the following

\vspace{.1in}

\emph{Claim 2. There exists at most one $j$ ($1 \le j \le s$) such that $|(\{v_j\} \cup f^{-1}(v_j)) \setminus S| >1$.} 

\vspace{.1in} 

\emph{Proof of Claim 2:} If $|(\{v_j\} \cup f^{-1}(v_j)) \setminus S| >1$ for some $j$ ($1 \le j \le s$), then, by Claim 1, $v_j \not\in S$ and $|f^{-1}(v_j) \setminus S|=1$. Suppose that there are two such $j$'s, say $j_1$ and $j_2$, where $1 \le j_1, j_2 \le s$. Then, even after all vertices in $V(G_1) \setminus (f^{-1}(v_{j_1}) \cup f^{-1}(v_{j_2}))$ are turned black, each vertex in $V(G_1) \setminus (f^{-1}(v_{j_1}) \cup f^{-1}(v_{j_2}))$ has at least two white neighbors in $G_1$; similarly, even after all vertices in $V(G_2) \setminus \{v_{j_1}, v_{j_2}\}$ are turned black, each vertex in $V(G_2) \setminus \{v_{j_1}, v_{j_2}\}$ has at least two white neighbors in $G_2$. So, it is impossible to turn the entire vertex set of $C(K_n,f)$ black, contradicting the assumption that $S$ is a zero forcing set. So, there exists at most one $j$ ($1 \le j \le s$) such that $|(\{v_j\} \cup f^{-1}(v_j)) \setminus S| >1$. $\Box$

\vspace{.1in}

If such a $j$ in Claim 2 exists, say $\{u, f(u)\} \cap S = \emptyset$ for some $u \in V(G_1)$, then either $V(G_1) \setminus \{u\} \subseteq S$ or $V(G_2) \setminus \{f(u)\} \subseteq S$; otherwise, each $G_i$ ($i=1,2$) has at least two white vertices, and thus no vertex in $S$ can force, contradicting the assumption that $S$ is a zero forcing set for $C(K_n, f)$. If $V(G_1) \setminus \{u\} \subseteq S$, then $S \cap W \neq \emptyset$ (otherwise, each vertex $u_x \in S \cap V(G_1)$ has two white neighbors, $u$ and $f(u_x)$, and thus $|S| \ge (n-1)+(n-s)=2n-s-1$. If $V(G_2) \setminus \{f(u)\} \subseteq S$, then, by Claim 1, $|S| \ge (n-1)+(n-s)=2n-s-1$. On the other hand, if $|(\{v_i\} \cup f^{-1}(v_i)) \setminus S| \le 1$ for each $i$ ($1 \le i \le s$), then $|S| \ge n+(n-s-1)=2n-s-1$. Thus, in each case, we have $|S| \ge 2n-s-1$. 

\vspace{.1in}

Therefore, $Z(C(K_n,f))=2n-s-1$ for $2 \le  s \le n-1$. \hfill
\end{proof}

\begin{figure}[htpb]
\begin{center}
\scalebox{0.4}{\input{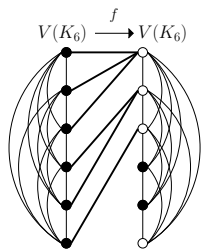}} \caption{$Z(C(K_6, f)) = 8$, where $|Range(f)|=3$}\label{ZnperK}
\end{center}
\end{figure}

%%%%%%%%%%%%%%%%%%%%%%%%%%%%%%%%%%%%%%%%%%%%%%%%%
%%%%%%%%%%%%%%%%%%%%%%%%%%%%%%%%%%%%%%%%%%%%%%%%%

\section{Zero Forcing Number of Functigraphs on Cycles}

In this section, for $n \ge 3$, we show that 1) $3 \le Z(C(C_n, \sigma)) \le n$ for a permutation $\sigma$; 2) $Z(C(C_n, f_0))=4$ for a constant function $f_0$; 3) $3 \le Z(C(C_n, f)) \le |Range(f)|+2$ for a function $f$ with $1<|Range(f)| <n$. Further, we give examples showing that the bounds of $Z(C(C_n, f))$ are sharp when $1<|Range(f)|<n$. Throughout this section, we let $V(G_1)=\{u_i \mid 1 \le i \le n\}$ and $E(G_1)=\{u_iu_{i+1} \mid 1 \le i \le n-1\} \cup \{u_1u_n\}$; similarly, let $V(G_2)=\{v_i \mid 1 \le i \le n\}$ and $E(G_2)=\{v_iv_{i+1} \mid 1 \le i \le n-1\} \cup \{v_1v_n\}$. 

\begin{Prop}\cite{AIM}\label{ZeroPC}
For $s \ge 3$ and $t \ge 2$,  $Z(C_s \square P_t) = \min\{s, 2t\}$.
\end{Prop}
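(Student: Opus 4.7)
The plan is to establish the two bounds separately; the upper bound is essentially free from the machinery already set up, while the lower bound is the substantive half.

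\textbf{Upper bound.} Since $Z(P_t)=1$ by Theorem \ref{ObsZ}(a) and $Z(C_s)=2$ by Theorem \ref{ObsZ}(b) (any cycle on $s\ge 3$ vertices is trivially a graph of two parallel paths, realized by splitting it into two arcs drawn in parallel with the two closing edges on opposite sides so that no crossing occurs), Theorem \ref{idupper} applied to $G=C_s$ and $H=P_t$ immediately gives
\[
Z(C_s\square P_t)\;\le\;\min\{Z(C_s)\cdot t,\ Z(P_t)\cdot s\}\;=\;\min\{2t,\,s\}.
\]
Labeling vertices $(i,j)$ with $i\in\mathbb{Z}_s$ and $1\le j\le t$, one verifies directly that either a full row $\{(i,1):1\le i\le s\}$ (size $s$, forcing column-by-column because each $(i,1)$ has $(i,2)$ as its unique white neighbor and one iterates on $j$) or two full adjacent columns $\{(1,j),(2,j):1\le j\le t\}$ (size $2t$, forcing outward around the cycle since each $(2,j)$ has $(3,j)$ as its unique white neighbor, and so on) is a zero forcing set.

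\textbf{Lower bound.} This is the main obstacle. The cleanest route I would take is the minimum-rank lower bound proved in \cite{AIM}: for every graph $G$, $Z(G)\ge M(G)$, where $M(G)$ denotes the maximum nullity over real symmetric matrices whose off-diagonal zero/nonzero pattern is $G$. One then realizes the nullity $\min\{s,2t\}$ by a carefully chosen tensor-sum $A=I_s\otimes T+C\otimes I_t$, with $T$ a symmetric tridiagonal matrix representing $P_t$ and $C$ a symmetric circulant matrix representing $C_s$; tuning the spectra of $T$ and $C$ so that sufficiently many eigenvalue sums $\mu_k+\lambda_\ell$ vanish gives a symmetric matrix with graph $C_s\square P_t$ and nullity $\min\{s,2t\}$, whence $Z(C_s\square P_t)\ge M(C_s\square P_t)\ge \min\{s,2t\}$.

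A purely combinatorial alternative would argue inductively on $t$, tracking the first row (or column) to become fully black during forcing and showing that filling it demands either $s$ or $2t$ initial black vertices according to whether propagation proceeds row-wise or column-wise. It is worth noting that the path-cover lower bound $Z(G)\ge P(G)$ from Theorem \ref{pathcover}(a) is \emph{not} sharp here: one checks, for example, that $P(C_4\square P_2)=2$ while $Z(C_4\square P_2)=4$, so machinery genuinely stronger than path covers is required. The delicate point in either approach is handling both grid directions simultaneously, since propagation in the cycle direction is cheap only if at least two ``longitudinal'' columns are pre-coloured, whereas propagation in the path direction requires an entire ``latitudinal'' row; the expression $\min\{s,2t\}$ precisely encodes the cheaper of these two strategies.
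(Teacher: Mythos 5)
First, note that the paper does not actually prove this proposition: it is quoted from \cite{AIM} and stated without argument, so there is no in-paper proof to compare yours against. Your upper bound is correct and complete: $Z(C_s)=2$ and $Z(P_t)=1$ feed into Theorem \ref{idupper} exactly as you say, and both of your explicit forcing sets (one full cycle-layer of size $s$, or two adjacent path-columns of size $2t$) check out.

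The lower bound, however, has a genuine gap as written. The strategy --- $Z(G)\ge M(G)$ together with a Kronecker sum $A=C\otimes I_t+I_s\otimes T$ whose nullity equals $\sum_{\mu}m_C(\mu)\,m_T(-\mu)$ --- is indeed the route taken in \cite{AIM}, but your specific choice of ``a symmetric circulant matrix representing $C_s$'' fails precisely at the boundary case $s=2t$. Any symmetric circulant whose graph is exactly $C_s$ has the form $aI+b(E+E^{T})$ with spectrum $\{a+2b\cos(2\pi k/s)\}$; for $s=2t$ this gives $t+1$ distinct eigenvalues with multiplicity list $(1,2,\dots,2,1)$, while any $T$ whose graph is $P_t$ is an irreducible tridiagonal matrix and therefore has $t$ simple eigenvalues. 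The nullity of the Kronecker sum is then at most $2(t-1)+1=2t-1<2t=\min\{s,2t\}$ no matter how you tune $a$, $b$, and the spectrum of $T$ (sanity check: $s=4$, $t=2$ is the $3$-cube, where nullity $4$ is required). To repair this you must leave the circulant class and invoke the inverse eigenvalue theory for cycles (Ferguson): a symmetric matrix with graph $C_{2t}$ whose cycle of off-diagonal entries has \emph{negative} product can realize the multiplicity list $(2,2,\dots,2)$, i.e., $t$ eigenvalues each of multiplicity two, and matching their negatives against the $t$ simple eigenvalues of $T$ then yields nullity $2t$. With that amendment, and a case check for the remaining parities of $s$ relative to $2t$, your argument closes. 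Your ``purely combinatorial alternative'' is only a one-sentence aspiration and should not be counted as a proof, though your observation that the path-cover bound of Theorem \ref{pathcover}(a) is not sharp here (e.g.\ $P(C_4\square P_2)=2<4=Z(C_4\square P_2)$) is correct and is a good reason to reach for the maximum-nullity bound instead.
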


As an immediate consequence of Proposition \ref{ZeroPC}, we have the following

\begin{Crlr}\label{zeroCid}
For $n \ge 3$, 
\begin{equation*} Z(C(C_n, id)) = \left\{
\begin{array}{ll}
3 & \mbox{ if $n=3$} ,\\
4 & \mbox{ if $n \ge 4$}  .
\end{array} \right.
\end{equation*}
\end{Crlr}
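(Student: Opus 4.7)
The plan is to observe that the functigraph $C(C_n, \mathrm{id})$ constructed from two copies of $C_n$ with the identity function joining them is exactly the Cartesian product $C_n \square P_2$ (the prism over $C_n$). Indeed, by the definition of a functigraph, $V(C(C_n,\mathrm{id})) = V(G_1) \cup V(G_2)$ with edge set $E(G_1) \cup E(G_2) \cup \{u_i v_i : 1 \le i \le n\}$, and this matches the edge set of $C_n \square P_2$ where $P_2$ has vertex set $\{1,2\}$: the two ``layers'' $G_1, G_2$ correspond to the two copies of $C_n$, and the ``vertical'' edges $u_i v_i$ correspond to edges of the $P_2$-factor.

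Once this identification is made, Proposition \ref{ZeroPC} applies with $s = n$ and $t = 2$, yielding
\[
Z(C(C_n,\mathrm{id})) \;=\; Z(C_n \square P_2) \;=\; \min\{n, 2 \cdot 2\} \;=\; \min\{n, 4\}.
\]
For $n = 3$ this evaluates to $3$, and for $n \ge 4$ it evaluates to $4$, which is the stated formula.

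There is essentially no obstacle here: the only step requiring care is the structural identification $C(C_n,\mathrm{id}) \cong C_n \square P_2$, which is a direct verification from the definitions. After that, the result is an immediate specialization of Proposition \ref{ZeroPC}.
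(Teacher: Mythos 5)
Your proposal is correct and matches the paper's intent exactly: the paper also derives this corollary as an immediate consequence of Proposition~\ref{ZeroPC} via the identification $C(C_n,\mathrm{id}) \cong C_n \square P_2$, yielding $\min\{n,4\}$. Your write-up simply makes explicit the structural identification that the paper leaves implicit.
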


We recall that a graph $G$ is \emph{strongly regular} with parameters $(n,k,\alpha,\beta)$ if $|V(G)| = n$, $G$ is $k$-regular (i.e., the degree of each vertex in $G$ is $k$), every pair of adjacent vertices has $\alpha$ common neighbors, and every pair of non-adjacent vertices has $\beta$ common neighbors.

\begin{Prop} \cite{AIM}\label{srg}
If $G$ is a strongly regular graph, then $Z(G) \ge \lfloor \frac{|V(G)|}{2}\rfloor$.
\end{Prop}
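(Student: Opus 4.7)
The plan is to invoke the foundational relationship between the zero forcing number and the maximum nullity, namely $Z(G)\ge M(G)=n-\operatorname{mr}(G)$, where $n=|V(G)|$ and $\operatorname{mr}(G)$ denotes the minimum rank over all real symmetric $n\times n$ matrices whose off-diagonal zero/nonzero pattern coincides with the adjacency of $G$. This is precisely the inequality for which $Z(G)$ was introduced in the first place, as emphasized in the paper's introduction. With it in hand, the result reduces to a clean spectral computation.

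First, I would recall that a strongly regular graph $G$ with parameters $(n,k,\alpha,\beta)$ has adjacency matrix $A$ with at most three distinct eigenvalues: the valency $k$ with multiplicity $1$, and two others $r$ and $s$ with multiplicities $f$ and $g$ satisfying $f+g=n-1$ (this follows from the standard identity $A^{2}=kI+\alpha A+\beta(J-I-A)$, which forces $A$ to have a minimal polynomial of degree at most $3$). Next, for any eigenvalue $\theta$ of $A$, the matrix $A-\theta I$ is real symmetric, carries the same off-diagonal zero/nonzero pattern as the adjacency of $G$, and has nullity equal to the $A$-multiplicity of $\theta$. Choosing $\theta\in\{r,s\}$ to be whichever has the larger multiplicity then yields $M(G)\ge\max(f,g)$.

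Finally, since $f+g=n-1$, the pigeonhole principle forces $\max(f,g)\ge\lceil (n-1)/2\rceil$, and a parity check confirms $\lceil (n-1)/2\rceil=\lfloor n/2\rfloor$ for every positive integer $n$. Chaining the three inequalities gives
\[
Z(G)\;\ge\;M(G)\;\ge\;\max(f,g)\;\ge\;\bigl\lfloor n/2 \bigr\rfloor,
\]
as required.

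The main obstacle I anticipate is conceptual rather than technical: the inequality is tight for several well-known SRGs (e.g.\ the Petersen graph, where equality holds), so any purely combinatorial attack via forcing chains or the path-cover bound of Theorem~\ref{pathcover}(a) would have to leverage the SRG identity $k(k-1-\alpha)=(n-k-1)\beta$ in a delicate way to hit the tight constant $1/2$. The spectral route sidesteps this by exploiting the rigidity of the adjacency spectrum of a strongly regular graph---the fact that $A$ has only three distinct eigenvalues forces one of them to have multiplicity at least $\lfloor n/2\rfloor$, which is exactly the leverage needed.
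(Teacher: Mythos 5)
Your proof is correct: the paper states this proposition as a citation to [AIM] without reproducing a proof, and your argument is precisely the standard one behind that cited result --- combining $Z(G)\ge M(G)$ with the fact that $A-\theta I$ realizes nullity $\mathrm{mult}_A(\theta)$ in $\mathcal{S}(G)$, and that a strongly regular graph's two non-principal eigenvalue multiplicities sum to $n-1$, so one is at least $\lceil (n-1)/2\rceil=\lfloor n/2\rfloor$. All the steps check out (including the parity identity and the degenerate cases with fewer than three distinct eigenvalues, which you cover by saying ``at most three''), so there is nothing to correct.
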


\begin{Rem}\cite{AIM, lncs}\label{petersen}
The Petersen graph $\mathcal{P}$ (see Figure \ref{ZupPetersen}) is strongly regular; thus, $Z(\mathcal{P}) =5$ by Corollary \ref{n} and Proposition \ref{srg}. 
\end{Rem}

\begin{figure}[htpb]
\begin{center}
\scalebox{0.35}{\input{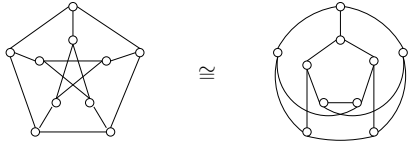}} \caption{The Petersen graph $\mathcal{P}$ with $Z(\mathcal{P})=5$}\label{ZupPetersen}
\end{center}
\end{figure}

As an immediate consequence of Corollary \ref{n}, we have the following

\begin{Crlr}
Let $G=C_n$ be the cycle of order $n \ge 3$, and let $\sigma: V(G_1) \rightarrow V(G_2)$ be a permutation. Then $3 \le Z(C(C_n, \sigma)) \le n$.
\end{Crlr}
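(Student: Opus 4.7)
The plan is to recognize that this statement is a direct specialization of Corollary \ref{n} to the case $G = C_n$, so essentially no new work is required beyond a single observation. The cycle $C_n$ is $2$-regular, hence $\delta(C_n) = 2$, which immediately turns the generic bound $1 + \delta(G) \le Z(C(G,\sigma)) \le n$ into $3 \le Z(C(C_n,\sigma)) \le n$.

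Concretely, I would write one or two sentences: first, observe $\delta(C_n)=2$ and apply Corollary \ref{n} to $G = C_n$ for the lower bound $Z(C(C_n,\sigma)) \ge 1 + \delta(C_n) = 3$; second, reuse Corollary \ref{n} to get $Z(C(C_n,\sigma)) \le n$ (the upper bound coming, as in the proof of Corollary \ref{n}, from the fact that $V(G_1)$ itself is a zero forcing set for any permutation graph, since once all of $G_1$ is black every vertex $v \in V(G_2)$ has the unique white neighbor $\sigma^{-1}(v)$ already forced --- but we simply quote Corollary \ref{n} rather than reprove this).

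There is no real obstacle: both inequalities are immediate substitutions into an already-proven result, and the authors flag this by phrasing the corollary as an ``immediate consequence'' of Corollary \ref{n}. The only thing to double-check is that $C_n$ is a connected graph of order $n \ge 3$ (so that Corollary \ref{n} applies), which is assumed in the hypothesis. I would expect the proof environment in the paper to contain essentially just the one line ``Since $\delta(C_n) = 2$, the result follows from Corollary \ref{n}.''
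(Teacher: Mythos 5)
Your proposal is correct and matches the paper exactly: the paper states this corollary with no proof at all, introducing it with ``As an immediate consequence of Corollary~\ref{n}, we have the following,'' which is precisely your one-line argument via $\delta(C_n)=2$.
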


\begin{Thm} \cite{lncs}
Let $G=C_n$ be the cycle of order $n \ge 3$, and let $\sigma: V(G_1) \rightarrow V(G_2)$ be a permutation. Then 
\begin{itemize}
\item[(a)] $Z(C(C_n, \sigma))=3$ if and only if $n=3$ (for any $\sigma$);
\item[(b)] $Z(C(C_n, \sigma))=n$ if and only if $n=3$ or $n=4$ (for any $\sigma$) or $C(C_n, \sigma)$ is isomorphic to the Petersen graph. 
\end{itemize}
\end{Thm}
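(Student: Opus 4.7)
For part (a), the backward direction is immediate: $C_3 \cong K_3$, so $C(C_3, \sigma) = C(K_3, \sigma)$ has $Z = 3$ by Proposition \ref{perK}. For the forward direction ($n \ge 4 \Rightarrow Z \ge 4$), I would use that $C(C_n, \sigma)$ is cubic and, for $n \ge 4$, triangle-free (a routine case check: any triangle using two $\sigma$-edges would violate injectivity of $\sigma$, and triangles lying entirely in $G_1$ or $G_2$ would require $C_n$ to contain a triangle). Suppose $|S| = 3$. For the first forcing to occur, some $v \in S$ must have both other elements $x, y$ of $S$ as its neighbors; triangle-freeness makes $x \not\sim y$. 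Once $v$ forces its remaining neighbor $w$, the black set is $\{v, x, y, w\}$, and any further forcing would require one of the edges $xw$ or $yw$ (since $xy$ is forbidden), but each would close a triangle through $v$. So no $|S| = 3$ can zero-force.

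For part (b), the backward direction covers three cases: $n = 3$ gives $Z = 3 = n$; for $n = 4$, part (a) gives $Z \ge 4$ while Corollary \ref{n} gives $Z \le 4$; the Petersen case is Remark \ref{petersen}. The forward direction is the substance: given $n \ge 5$ and $C(C_n, \sigma) \not\cong$ Petersen, I construct a zero forcing set of size $n-1$ as $S = V(G_1) \setminus \{u_i\}$ for a suitably chosen $i$. In round one, every $u_j$ with $j \notin \{i-1, i, i+1\}$ forces its unique white neighbor $v_{\sigma(j)}$, leaving white only $u_i$ and the three-element set $W = \{v_{\sigma(i-1)}, v_{\sigma(i)}, v_{\sigma(i+1)}\}$. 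Further propagation requires some $v \in V(G_2) \setminus W$ to be an arc-endpoint in $C_n$ with exactly one $G_2$-neighbor in $W$; failure (every complement-arc has length one) happens only when $n = 6$ with $W \in \{\{v_1, v_3, v_5\}, \{v_2, v_4, v_6\}\}$, or when $n = 5$ with $W$ a ``two consecutive plus one isolated'' triple of $C_5$. In all other cases one verifies by direct chain-forcing that the endpoints force into $W$, the newly blackened $W$-vertices force the remaining white $W$-vertex, and finally $v_{\sigma(i)}$ (having $u_i$ as its unique white neighbor) forces $u_i$.

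The main obstacle is showing an admissible $i$ always exists. For $n \ge 7$, $|V(G_2) \setminus W| = n - 3 \ge 4$ cannot consist of three length-one arcs, so every choice of $i$ works. For $n = 6$, a pigeonhole argument shows $W_i$ cannot lie in $\{\{v_1, v_3, v_5\}, \{v_2, v_4, v_6\}\}$ for all six $i$: $W_i$ and $W_{i+1}$ share $\sigma(i), \sigma(i+1)$, which would have to belong to both triples, but they are disjoint. For $n = 5$, I would characterize Petersen-inducing permutations as exactly the ten affine maps $\sigma(i) = \pm 2 i + c \pmod 5$, equivalently those with $\sigma(i+1) - \sigma(i) \equiv \pm 2 \pmod 5$ for every $i$ (ensuring girth $5$, hence Petersen as the unique $(3,5)$-cage on ten vertices); a transition-graph analysis of pairs $(d_{i-1}, d_i) = (\sigma(i) - \sigma(i-1), \sigma(i+1) - \sigma(i))$, under the constraints of injectivity ($d_{i-1} + d_i \not\equiv 0$) and avoidance of a 3-consecutive image, shows the only cyclic length-$5$ difference sequences are the constant $2$'s and constant $3$'s, so any non-Petersen $\sigma$ admits a 3-consecutive $W_i$ and the construction succeeds.
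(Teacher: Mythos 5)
The paper does not prove this theorem; it is quoted from \cite{lncs} without proof, so there is no in-paper argument to compare against. Judged on its own terms, your proposal is sound and essentially complete. Part (a) is clean: the graph is cubic and triangle-free for $n\ge 4$, and your four-vertex stalling argument (any further force from $\{v,x,y,w\}$ would close a triangle through $v$) correctly shows any cubic triangle-free graph on more than four vertices has $Z\ge 4$; combined with $Z\ge 1+\delta=3$ from Corollary~\ref{n}, this gives (a). For (b), the construction $S=V(G_1)\setminus\{u_i\}$ with the arc analysis of $V(G_2)\setminus W_i$ is the right engine, and your $n=5$ difference-sequence analysis does check out: the only cyclic sequences $(d_1,\dots,d_5)$ with $d_{j-1}+d_j\not\equiv 0$ and every $W_j$ non-consecutive are the constant $\pm 2$ sequences, which are exactly the girth-$5$ (hence Petersen) permutations.

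Two spots deserve tightening. First, for $n=6$ your pigeonhole as stated only excludes $W_i$ and $W_{i+1}$ being the two \emph{distinct} bad triples; you must also exclude all six $W_i$ being the \emph{same} bad triple, which follows since $W_i=W_{i+1}$ forces $\sigma(i-1)=\sigma(i+2)$, impossible for a permutation when $3\not\equiv 0 \pmod 6$. Second, the phrase ``one verifies by direct chain-forcing'' hides the one genuinely delicate point of the propagation: a newly blackened vertex of $W$ may fail to force onward if it is $v_{\sigma(i)}$ (whose $G_1$-neighbor $u_i$ is still white). The clean way to close this is to note that in any stalled state with white vertices remaining in $G_2$, every maximal black arc of $G_2$ of length at least two has two \emph{distinct} endpoints, each with exactly one white $G_2$-neighbor, and at most one of them can be $v_{\sigma(i)}$; so the other forces, the long arc only grows, and no stall occurs until $G_2$ is entirely black, after which $u_{i-1}$ (or $v_{\sigma(i)}$) forces $u_i$. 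With those two patches the argument is complete.
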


\begin{Prop}
Let $G=C_n$ be the cycle of order $n \ge 3$, and let $f_0:V(G_1) \rightarrow V(G_2)$ be a  constant function. Then $Z(C(C_n,f_0)) =4$.
\end{Prop}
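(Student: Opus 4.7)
The plan is to prove $Z(C(C_n, f_0)) = 4$ by exhibiting a zero forcing set of size $4$ for the upper bound and applying the cut-vertex bound (Theorem \ref{cutV}) for the lower bound. Without loss of generality I may assume $f_0(u_i) = v_1$ for every $i$, so in the functigraph the vertex $v_1$ has degree $n+2$, each $u_i$ has degree $3$, and each $v_j$ with $j \ne 1$ has degree $2$. The pivotal structural fact is that $v_1$ is a cut-vertex: after deleting it the graph falls into two components, the cycle $G_1$ and the path $v_2 v_3 \cdots v_n$, because every cross edge of the functigraph is incident to $v_1$.

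For the upper bound I would take $S = \{u_1, u_2, v_2, v_3\}$ and verify the forcing. Initially $v_3 \to v_4$ and $v_2 \to v_1$ are the only legal forces; thereafter $v_4 \to v_5 \to \cdots \to v_n$ runs around $G_2$. Once $v_1$ is black, each of $u_1, u_2$ has exactly one white neighbor in the functigraph (namely $u_n$ and $u_3$ respectively), so $u_1 \to u_n$ and $u_2 \to u_3$, and these two chains then propagate around $G_1$ until they meet. Hence $Z(C(C_n, f_0)) \le 4$.

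For the lower bound I would apply Theorem \ref{cutV} with $v = v_1$. Letting $W_1 = V(G_1)$ and $W_2 = \{v_2, \ldots, v_n\}$, one checks directly that $G_1' := \langle W_1 \cup \{v_1\}\rangle \cong W_n$ (the wheel formed by $C_n$ with a universal vertex adjoined) and $G_2' := \langle W_2 \cup \{v_1\}\rangle \cong C_n$. The theorem then gives $Z(C(C_n, f_0)) \ge Z(W_n) + Z(C_n) - 1$. Since $Z(C_n) = 2$ (from $\delta(C_n) = 2$ and Theorem \ref{mindegree}, witnessed by any two adjacent vertices) and $Z(W_n) = 3$ (from $\delta(W_n) = 3$ and Theorem \ref{mindegree}, witnessed by the hub plus two consecutive rim vertices; the degenerate case $n = 3$ gives $W_3 \cong K_4$, handled by Theorem \ref{ObsZ}(c)), we obtain $Z(C(C_n, f_0)) \ge 3 + 2 - 1 = 4$, completing the proof.

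The only piece not literally in the excerpt is the evaluation $Z(W_n) = 3$, and this is the main minor obstacle; fortunately it follows immediately from Theorem \ref{mindegree} plus the explicit three-vertex forcing set just described. Everything else is a routine forcing-chain check together with the cut-vertex decomposition, and the argument is uniform in $n \ge 3$.
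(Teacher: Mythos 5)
Your proposal is correct and follows essentially the same route as the paper: the same zero forcing set $\{u_1,u_2,v_2,v_3\}$ for the upper bound, and the same cut-vertex decomposition at $v_1$ into the wheel $\langle V(G_1)\cup\{v_1\}\rangle$ (with zero forcing number $3$ via Theorem \ref{mindegree} and an explicit three-vertex set) and the cycle $G_2$, combined via Theorem \ref{cutV}. The only cosmetic difference is that you name the wheel $W_n$ explicitly and separately note the degenerate case $W_3\cong K_4$, which the paper's argument covers implicitly.
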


\begin{proof}
Without loss of generality, we may assume that $f_0(u_i)=v_1$ for each $i$ ($1 \le i \le n$). One can easily check that $S=\{u_1, u_2, v_2, v_3\}$ is a zero forcing set for $C(C_n, f_0)$, and thus $Z(C(C_n, f_0)) \le 4$. Next, we show that $Z(C(C_n, f_0)) \ge 4$. If we let $H=\langle V(G_1) \cup \{v_1\}\rangle$, then $Z(H)=3$: $Z(H) \le 3$ since $\{u_1, u_2, v_1\}$ is a zero forcing set for $H$, and $Z(H) \ge \delta(H)=3$ by Theorem \ref{mindegree}. Since $Z(G_2)=2$, noting that $v_1$ is a cut-vertex of $C(C_n, f_0)$, we have $Z(C(C_n, f_0)) \ge 4$ by Theorem \ref{cutV}. Thus, $Z(C(C_n,f_0))= 4$ for $n \ge 3$.\hfill
\end{proof}

\begin{Thm}\label{functiCbetween}
Let $G=C_n$ be the cycle of order $n \ge 3$. Let $f: V(G_1) \rightarrow V(G_2)$ be a function with $|Range(f)|=s$, where $1 < s < n$. Then $3 \le Z(C(C_n, f)) \le s+2$, and both bounds are sharp.
\end{Thm}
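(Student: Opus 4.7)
The lower bound $Z(C(C_n,f)) \ge 3$ is immediate from Theorem \ref{functibounds} since $\delta(C_n) = 2$. The substance lies in constructing a zero forcing set of cardinality $s+2$ for the upper bound; I plan to do this by case analysis on how $Range(f)$ sits inside $G_2 \cong C_n$. Writing $Range(f) = \{v_{a_1}, \ldots, v_{a_s}\}$ with $a_1 < \cdots < a_s$, the dichotomy is whether $Range(f)$ contains two cyclically adjacent vertices in $G_2$.

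In \textbf{Case A} ($Range(f)$ contains an adjacent pair), take $S = \{u_1, u_2\} \cup Range(f)$, which has cardinality $s+2$. The verification splits into two stages: first, since every $f(u_i) \in Range(f) \subseteq S$ is black, the forcing chains $u_1 \to u_n \to u_{n-1} \to \cdots$ and $u_2 \to u_3 \to u_4 \to \cdots$ propagate around $G_1$ and meet, blackening all of $V(G_1)$; second, once $V(G_1)$ is black, each $v_j \in Range(f)$ has all of its $G_1$-neighbors black, so the remaining forcing reduces to ordinary zero forcing in $C_n$ from $Range(f)$, which succeeds because in $C_n$ any subset containing two adjacent vertices is a zero forcing set.

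In \textbf{Case B} ($Range(f)$ has no two adjacent vertices in $G_2$, equivalently $a_{i+1} - a_i \ge 2$ cyclically), take
$$S = \{u_1, u_2\} \cup \{v_{a_1}, v_{a_1+1}\} \cup \{v_{a_i+1} \colon 2 \le i \le s-1\},$$
so $|S| = s+2$ after one verifies (using the gap condition) that the $s$ vertices of $S \cap V(G_2)$ are distinct. The critical point is that each auxiliary vertex $v_{a_i+1}$ lies outside $Range(f)$, hence $f^{-1}(v_{a_i+1}) = \emptyset$ and its only neighbors in $C(C_n, f)$ are $v_{a_i}$ and $v_{a_i+2}$. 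Starting from the adjacent pair $v_{a_1}, v_{a_1+1}$, the chain $v_{a_1+1} \to v_{a_1+2} \to \cdots \to v_{a_2-1} \to v_{a_2}$ propagates unhindered through gap vertices; once $v_{a_2}$ is black, the already-black bridge $v_{a_2+1}$ forces $v_{a_2+2}$, and iterating yields the chain $v_{a_2+2} \to \cdots \to v_{a_3} \to \cdots \to v_{a_s}$. At that point all of $Range(f)$ is black, so the chains from $\{u_1, u_2\}$ propagate around $G_1$ as in Case A; finally, once $V(G_1)$ is black, the wrap-around gap $\{v_{a_s+1}, \ldots, v_{a_1-1}\}$ is forced starting from $v_{a_s}$, whose only remaining white neighbor is then $v_{a_s+1}$.

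The main obstacle is the bookkeeping in Case B, where at each bridging step one must verify that the responsible black vertex has exactly one white neighbor; this requires coordinating the states of $G_1$ and $G_2$ throughout the propagation. For sharpness, the lower bound $3$ is attained by small examples such as $C(C_3, f)$ with $|Range(f)| = 2$, for which a direct check shows $\{u_1, u_2, v_1\}$ is a zero forcing set; and the upper bound $s+2$ is attained for example by $C(C_5, f)$ with $s = 2$ and $f(u_1) = f(u_2) = f(u_3) = v_1$, $f(u_4) = f(u_5) = v_3$, where one verifies by case analysis that no three-element subset is a zero forcing set.
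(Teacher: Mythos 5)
Your proof is correct and follows essentially the same route as the paper: the lower bound via Theorem \ref{functibounds}, and the upper bound via the same dichotomy on whether $Range(f)$ contains two adjacent vertices of $G_2$, together with an explicit $(s+2)$-element forcing set in each case (your set in the independent case differs from the paper's choice but works equally well). The only substantive difference lies in the sharpness examples --- the paper uses $C_{k^2}$ with $|Range(f)|=k$ and verifies the upper-bound sharpness in detail, whereas your $C(C_5,f)$ example is correct but the claim that no $3$-element zero forcing set exists is asserted rather than checked, and your $C_3$ example needs the labeling to place $v_1$ in $Range(f)$ as the image of $u_1$ or $u_2$ for the stated set $\{u_1,u_2,v_1\}$ to force.
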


\begin{proof}
Let $W=Range(f)$ with $|W|=s$, where $2 \le s \le n-1$. By Theorem \ref{functibounds}, $Z(C(C_n, f)) \ge 3$. Next, we show that $Z(C(C_n,f)) \le s+2$. We consider two cases.

\vspace{.1in}

\emph{Case 1. $\langle W \rangle \cong s K_1$:} In this case, no two vertices in $W$ are adjacent in $G_2$; let $W=\{v_{j_1}, v_{j_2}, \ldots, v_{j_s}\}$, where $j_1 < j_2 < \ldots < j_s$. One can readily check that $S=\{u_1, u_2, v_k\} \cup (W \setminus \{v_{j_1}\})$, for $v_kv_{j_2} \in E(G_2)$ with $j_1 < k < j_2 $, forms a zero forcing set for $C(C_n, f)$ with $|S|=s+2$. 

\vspace{.1in}

\emph{Case 2. $\langle W \rangle \not\cong s K_1$:} In this case, there exist two adjacent vertices in $G_2$ that belong to $W$. One can readily check that $S=\{u_1, u_2\} \cup W$ forms a zero forcing set for $C(C_n, f)$ with $|S|=s+2$.

\vspace{.1in}

Thus, $Z(C(C_n,f)) \le s+2$. For the sharpness of the lower bound, let $f(u_i)=v_1$ for $1 \le i \le n-1$ and $f(u_n)=v_2$; then $Z(C(C_n, f))=3$ by Theorem \ref{functibounds} and the fact that $S=\{u_1,v_2, v_3\}$ is a zero forcing set for $C(C_n, f)$. For the sharpness of the upper bound, see Remark \ref{Csharp}.~\hfill
\end{proof}

\begin{Rem} \label{Csharp}
Let $G=C_{k^2}$, $V(G_1)=\{u_i \mid 1 \le i \le k^2\}$, and $V(G_2)=\{v_i \mid 1 \le i \le k^2\}$, where $k \ge 3$. Let $f:V(G_1) \rightarrow V(G_2)$ be defined by $f(u_{ak+i})=v_{i}$, where $1 \le i \le k$ and $0 \le a \le k-1$ (see Figure \ref{refQ}). Then $|Range(f)|=k$ and $Z(C(G, f)) = k+2$. 
\end{Rem}

\begin{figure}[htpb]
\begin{center}
\scalebox{0.38}{\input{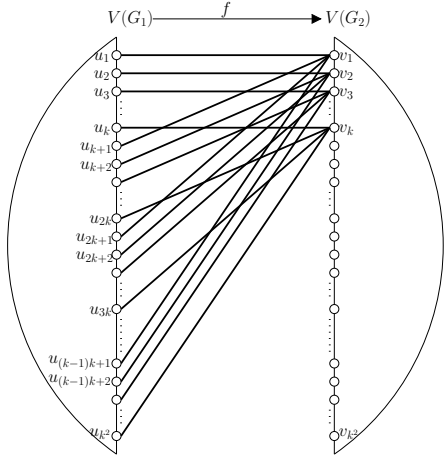}} \caption{An example satisfying $Z(C(C_n, f))=k+2$ with $|Range(f)|=k$}\label{refQ}
\end{center}
\end{figure}

\begin{proof}
Let $C(G, f)$ be the functigraph defined in Remark \ref{Csharp}. Since $S=\{u_1, u_2\} \cup \{v_i \mid 1 \le i \le k\}$ is a zero forcing set for $C(G, f)$ with $|S|=k+2$, $Z(C(G, f)) \le k+2$. Next, we show that $Z(C(G,f)) \ge k+2$. Assume, to the contrary, that $Z(C(G,f)) \le k+1$. Let $W=\{v_i \mid 1 \le i \le k\}$ and let $S$ be a zero forcing set for $C(G, f)$. First, notice that $|S \cap V(G_1)| \ge 2$; otherwise, no vertex in $G_2$ can force a vertex in $G_1$ and vice versa. Second, at least two adjacent vertices in $G_1$ must belong to $S$; otherwise, no vertex in $S \cap V(G_1)$ can force any vertex. Suppose that $|S \cap V(G_1)|=x$ and $|S \cap V(G_2)|=y$ with $x+y \le k+1$. Then at most ($x-2$) vertices in $S \cap V(G_1)$ forces their images, and thus at most $x+y-2$ ($\le k-1$) vertices in $W$ can be black after one global application of the color-change rule. So, there exists a vertex in $W$, say $v_j$ for some $j$ ($1 \le j \le k$), that is white unless a vertex in $f^{-1}(v_j)$ forces $v_j$ after applying the color-change rule on $S \cap V(G_1)$ as long as possible. For each $a$ ($0 \le a \le k-1$), $N_{G_1}[u_{ak+j}] \not\subseteq S$; otherwise, $u_{ak+j} \rightarrow v_j$, for some $a$, after one global application of the color-change rule. Even if $N_{G_1}[u_{ak+j}]$ (and thus $v_j$) are turned black, $v_j$ cannot force any vertex in $f^{-1}(v_j)$, unless all but one vertex in $f^{-1}(v_j)$ are black after applying the color-change rule as long as possible. So, $|S \cap \{u_{ak+i} \mid 1 \le i \le k\}| \ge 2$ for all but one value $a$ ($0 \le a \le k-1$), and at least a vertex in $u_{\alpha} \in S \cap V(G_1)$ satisfies $|S \cap N_{C(G,f)}[u_{\alpha}]| \ge 3$. Thus, $|S| \ge 2k-1$, contradicting the assumption that $|S| \le k+1$ for $k \ge 3$. Thus, $Z(C(G, f)) \ge k+2$, and thus $Z(C(G, f))=k+2$.~\hfill
\end{proof}

%%%%%%%%%%%%%%%%%%%%%%%%%%%%%%%%%%%%%%%%%%%%%%%%%
%%%%%%%%%%%%%%%%%%%%%%%%%%%%%%%%%%%%%%%%%%%%%%%%%

\section{Zero Forcing Number of Functigraphs on Paths}

In this section, for $n \ge 3$, we show that 1) $2 \le Z(C(P_n, \sigma)) \le n$ for a permutation $\sigma$; 2) $Z(C(P_n, f_0))=2$ for a constant function $f_0$; 3) $2 \le Z(C(P_n, f)) \le |Range(f)|+1$ for a function $f$ with $1<|Range(f)| <n$. Further, we give examples showing that the bounds of $Z(C(P_n, f))$ are sharp when $1<|Range(f)|<n$. Throughout this section, we let $V(G_1)=\{u_i \mid 1 \le i \le n\}$ and $E(G_1)=\{u_iu_{i+1} \mid 1 \le i \le n-1\}$; similarly, let $V(G_2)=\{v_i \mid 1 \le i \le n\}$ and $E(G_2)=\{v_iv_{i+1} \mid 1 \le i \le n-1\}$. 

\begin{Prop}\cite{AIM}\label{ZeroPP}
For $s,t \ge 2$,  $Z(P_s \square P_t) = \min\{s, t\}$.
\end{Prop}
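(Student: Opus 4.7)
The plan is to treat the two bounds separately. For the upper bound $Z(P_s \square P_t) \le \min\{s,t\}$, I would apply Theorem \ref{idupper} directly: since $Z(P_n)=1$ by Theorem \ref{ObsZ}(a), one has
\[
Z(P_s \square P_t) \le \min\{Z(P_s) \cdot t,\ Z(P_t) \cdot s\} = \min\{s,t\}.
\]
This is realized by the explicit zero forcing set obtained, assuming WLOG $s \le t$, by coloring the entire first column $\{(i,1):1 \le i \le s\}$ black: each $(i,1)$ has both in-column neighbors black, so its unique white neighbor is $(i,2)$, and the forcing $(i,1) \to (i,2)$ blackens column $2$ in one round; iterating column by column blackens the whole grid.

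For the lower bound $Z(P_s \square P_t) \ge \min\{s,t\}$, the key tool is the foundational AIM inequality $Z(G) \ge M(G)$ from \cite{AIM}, where $M(G)$ is the maximum nullity over the family of real symmetric matrices whose off-diagonal zero/nonzero pattern is prescribed by $G$. So it suffices to construct a matrix $A$ in this family with $\mathrm{null}(A) \ge \min\{s,t\}$. Assume WLOG $s \le t$; pick any $s \times s$ symmetric tridiagonal matrix $B$ with nonzero off-diagonals, whose $s$ eigenvalues $\beta_1,\ldots,\beta_s$ are automatically distinct, and, by the classical inverse spectral theorem for Jacobi matrices, pick a $t \times t$ symmetric tridiagonal matrix $C$ with nonzero off-diagonals whose spectrum $\gamma_1,\ldots,\gamma_t$ contains $\{-\beta_1,\ldots,-\beta_s\}$. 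Setting
\[
A = B \otimes I_t + I_s \otimes C,
\]
we have $A_{(i,j),(i',j')} = B_{ii'}\delta_{jj'} + \delta_{ii'}C_{jj'}$, which is nonzero for $(i,j) \neq (i',j')$ precisely when $(i,j)(i',j') \in E(P_s \square P_t)$; thus $A$ lies in the required family and its spectrum $\{\beta_i+\gamma_j\}$ contains at least $s$ zeros, yielding $\mathrm{null}(A) \ge s$.

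The principal obstacle is the lower bound. The natural combinatorial route via $P(G) \le Z(G)$ of Theorem \ref{pathcover}(a) cannot succeed, because this inequality is not sharp on grids: one verifies directly that $P(P_3 \square P_3)=2$, witnessed by the induced $7$-path on the vertices $(1,1),(2,1),(3,1),(3,2),(3,3),(2,3),(1,3)$ together with the induced $P_2$ on $\{(1,2),(2,2)\}$, while the proposition asserts $Z(P_3 \square P_3)=3$. Hence one must either appeal to the minimum-rank/maximum-nullity machinery as above, or else develop a combinatorial invariant strictly finer than the induced path cover number that is sensitive to the way forcing chains must actually propagate.
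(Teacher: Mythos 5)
The paper states this proposition as a result imported from \cite{AIM} and supplies no proof of its own, so there is nothing internal to compare against; your argument is correct and reconstructs the standard proof from that source: the upper bound follows from Theorem \ref{idupper} with $Z(P_n)=1$ (equivalently, one black column forces column by column), and the lower bound follows from the AIM inequality $Z(G)\ge M(G)$ applied to a Kronecker sum $A=B\otimes I_t+I_s\otimes C$ of Jacobi matrices, whose off-diagonal pattern is exactly that of $P_s\square P_t$ and whose spectrum $\{\beta_i+\gamma_j\}$ can be arranged (via simplicity of Jacobi spectra and the inverse spectral theorem) to contain $0$ with multiplicity at least $\min\{s,t\}$. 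Your observation that the path cover bound of Theorem \ref{pathcover}(a) cannot yield the lower bound --- e.g. $P(P_3\square P_3)=2<3=Z(P_3\square P_3)$ --- is also correct and accurately explains why the maximum nullity machinery (or some finer combinatorial invariant) is genuinely needed here.
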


As an immediate consequence of Proposition \ref{ZeroPP}, we have the following

\begin{Crlr}\label{zeroPid}
For $n \ge 3$, $Z(C(P_n, id)) =2$.
\end{Crlr}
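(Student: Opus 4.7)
The plan is to recognize that when $f = id$ is the identity function from $V(G_1)$ to $V(G_2)$, the functigraph $C(G, id)$ is nothing other than the Cartesian product $G \square P_2$. Indeed, unpacking the definition of the functigraph, the vertex set of $C(G, id)$ is $V(G_1) \cup V(G_2)$, the edges within $G_1$ and $G_2$ are preserved, and the extra edges are exactly $\{u_i v_i \mid 1 \le i \le n\}$ with $v_i = id(u_i)$. This matches the definition of the Cartesian product $G \square P_2$, where we take two copies of $G$ and join each vertex to its counterpart in the second copy along a path of length $1$.

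Granting this identification, I would specialize to $G = P_n$ to obtain $C(P_n, id) \cong P_n \square P_2$. Then I would apply Proposition \ref{ZeroPP} with parameters $s = n$ and $t = 2$, which yields
\[
Z(C(P_n, id)) = Z(P_n \square P_2) = \min\{n, 2\} = 2,
\]
since $n \ge 3 > 2$. This completes the argument.

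The only step requiring any care is verifying the isomorphism $C(G, id) \cong G \square P_2$, which is a direct comparison of the vertex sets and edge sets from the two definitions; there is no computational obstacle to speak of, and no lower bound needs to be argued separately since Proposition \ref{ZeroPP} already gives equality. Thus the proof is essentially a one-line deduction from the identification above together with the cited proposition.
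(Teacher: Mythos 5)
Your proposal is correct and matches the paper's argument: the paper also derives this corollary as an immediate consequence of Proposition \ref{ZeroPP} via the identification $C(P_n, id) \cong P_n \square P_2$, giving $Z(C(P_n,id)) = \min\{n,2\} = 2$. Your explicit verification of the isomorphism is a welcome (if routine) addition that the paper leaves implicit.
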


As an immediate consequence of Corollary \ref{n}, we have the following

\begin{Crlr}
Let $G=P_n$ be the path of order $n \ge 3$, and let $\sigma: V(G_1) \rightarrow V(G_2)$ be a permutation. Then $2 \le Z(C(P_n, \sigma)) \le n$.
\end{Crlr}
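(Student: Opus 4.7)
The plan is to invoke Corollary~\ref{n} directly. That corollary establishes $1+\delta(G) \le Z(C(G,\sigma)) \le n$ for an arbitrary graph $G$ of order $n \ge 3$ and any permutation $\sigma$, so the entire task reduces to computing $\delta(P_n)$ and substituting.

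First I would note that $\delta(P_n) = 1$: the two endpoints of $P_n$ each have exactly one neighbor, while the $n-2$ internal vertices have degree $2$, so the minimum vertex degree equals $1$. Plugging this into Corollary~\ref{n} immediately yields
\[
2 \;=\; 1 + \delta(P_n) \;\le\; Z(C(P_n,\sigma)) \;\le\; n,
\]
which is the claim.

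There is essentially no obstacle here; this is precisely why the result is introduced as ``an immediate consequence'' of Corollary~\ref{n}. As a consistency check, Corollary~\ref{zeroPid} gives $Z(C(P_n,\mathrm{id})) = 2$, confirming that the lower bound is attained (by the identity permutation) and that the specialization of Corollary~\ref{n} to paths is tight on the bottom end.
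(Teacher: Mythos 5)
Your proof is correct and matches the paper exactly: the paper presents this corollary as an immediate consequence of Corollary~\ref{n}, and your argument is precisely that specialization with $\delta(P_n)=1$. Nothing further is needed.
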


\begin{Thm} \cite{lncs}
Let $G=P_n$ be the path of order $n \ge 3$, and let $\sigma: V(G_1) \rightarrow V(G_2)$ be a permutation. Then
\begin{itemize}
\item[(a)] $Z(C(P_n, \sigma))=2$ if and only if $C(P_n, \sigma) \cong P_n \square P_2$;
\item[(b)] $Z(C(P_n,{\sigma}))=n$ if and only if (i) $n=3$ and $C(P_3,{\sigma}) \not\cong P_3 \square P_2$, or (ii) $n=4$ and $C(P_4,{\sigma})$ is isomorphic to the permutation graph satisfying $\sigma(u_1)=v_3$, $\sigma(u_2)=v_4$, $\sigma(u_3)=v_1$, and $\sigma(u_4)=v_2$. 
\end{itemize}
\end{Thm}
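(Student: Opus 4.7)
The plan is to split the argument along (a) and (b), using the classifications of small zero forcing numbers from Theorem~\ref{ObsZ}, Corollary~\ref{zeroPid}, and the upper bound $Z(C(P_n,\sigma))\le n$ from Corollary~\ref{n}.

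For (a), the direction $(\Leftarrow)$ is immediate from Corollary~\ref{zeroPid} together with $P_n\square P_2\cong C(P_n,id)$. For the converse, assume $Z(C(P_n,\sigma))=2$; by Theorem~\ref{ObsZ}(b), $C(P_n,\sigma)$ is a graph of two parallel paths. The key structural input is that $C(P_n,\sigma)$ has exactly four vertices of degree $2$ (namely $u_1, u_n, v_1, v_n$) and all others of degree $3$. Since the endpoints of the two parallel paths account for exactly four low-degree slots, these four vertices must be the path endpoints, and this forces $G_1$ and $G_2$ themselves to play the role of the two parallel paths (any alternative decomposition would have to promote some matching edge to a path edge, conflicting with the induced-path requirement given that matching edges are pairwise non-adjacent). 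The non-crossing condition on the $\sigma$-edges drawn as straight segments between parallel lines then forces $\sigma$ to be the identity or the reversal, so $C(P_n,\sigma)\cong P_n\square P_2$.

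For the sufficiency in (b): when $n=3$, exactly two of the six permutations produce $P_3\square P_2$ (hence $Z=2$ by (a)), while for each of the remaining four permutations, (a) rules out $Z=2$ and Corollary~\ref{n} gives $Z\le 3$, forcing $Z=3$. The exceptional $n=4$ case in (ii) is verified by direct computation on the named 8-vertex graph: $Z\le 4$ is Corollary~\ref{n}, and the lower bound $Z\ge 4$ is obtained by ruling out every three-vertex candidate, using the graph's visible symmetries to compress the enumeration.

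The necessity direction of (b) is the main obstacle: I must construct, for every $\sigma$ not on the list, a zero forcing set of size at most $n-1$. My plan is to choose a canonical candidate of the form $V(G_1)\setminus\{u_k\}$ for a $\sigma$-dependent $k$ (typically near an endpoint), and trace the color-change rule in three stages --- a sweep along $G_1$ that forces $u_k$, the pointwise forcing $u_j\to\sigma(u_j)$ of matched images into $G_2$, and a final sweep in $G_2$ that completes the coloring. The subtlety is choosing $k$ so the $G_2$-sweep does not stall at an endpoint $v_1$ or $v_n$ with two white neighbors; for $n\ge 5$ I expect a near-uniform choice (e.g.\ taking $k$ to be $\sigma^{-1}(v_2)$ or $\sigma^{-1}(v_{n-1})$) to work, and for $n=4$ the finite list of remaining permutations --- collapsed by the obvious $u\leftrightarrow v$ and left-right symmetries --- can be dispatched by a brief case analysis.
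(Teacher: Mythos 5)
The paper does not actually prove this theorem; it is quoted from \cite{lncs} without proof, so I can only judge your proposal on its own merits. The decisive problem is the necessity direction of (b), which you yourself identify as ``the main obstacle'': your canonical candidate $S=V(G_1)\setminus\{u_k\}$ with $k=\sigma^{-1}(v_2)$ or $k=\sigma^{-1}(v_{n-1})$ is not a zero forcing set in general, even for $n\ge 5$. Take $n=5$ and $\sigma(u_1)=v_1$, $\sigma(u_2)=v_3$, $\sigma(u_3)=v_2$, $\sigma(u_4)=v_5$, $\sigma(u_5)=v_4$. With $k=\sigma^{-1}(v_2)=3$, the set $\{u_1,u_2,u_4,u_5\}$ forces $v_1,v_4$, then $v_2$, and then stalls with $u_3,v_3,v_5$ still white: $v_2$ is blocked by $\{v_3,u_3\}$, $v_4$ by $\{v_3,v_5\}$, $u_2$ by $\{u_3,v_3\}$, and $u_4$ by $\{u_3,v_5\}$. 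With $k=\sigma^{-1}(v_4)=5$, the set $\{u_1,u_2,u_3,u_4\}$ forces $v_1,v_2,v_3,v_4$ and then stalls with $u_5,v_5$ white, since $v_4$ and $u_4$ are each adjacent to both. (The theorem is still true here: $\{u_1,u_2,u_3,v_5\}$ does force everything, but it is not of your proposed form.) So the single near-uniform choice of $k$ cannot carry the argument; the construction of a size-$(n-1)$ forcing set genuinely has to depend on the global structure of $\sigma$, and that dependence is exactly the content you have deferred. Until you supply a construction (or a different argument) that provably works for every non-exceptional $\sigma$, part (b) is not proved.

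A secondary, smaller gap is in (a)$(\Rightarrow)$. Your degree count shows $C(P_n,\sigma)$ has exactly four vertices of degree $2$, but it does not follow that these must be the endpoints of the two parallel paths: in a two-parallel-paths decomposition a degree-$2$ vertex could be path-interior (with no cross edges), e.g.\ $u_1$ could sit inside a path as $u_2$--$u_1$--$\sigma(u_1)$, with the matching edge serving as a path edge. Likewise the parenthetical claim that promoting a matching edge to a path edge conflicts with the induced-path requirement ``because matching edges are pairwise non-adjacent'' is not an argument: a candidate path may interleave $G_1$-edges, $G_2$-edges, and matching edges without ever using two matching edges consecutively. You need to actually rule out alternative decompositions (or argue directly that a non-monotone $\sigma$ admits no zero forcing set of size $2$) before concluding $C(P_n,\sigma)\cong P_n\square P_2$.
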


\begin{Prop}
Let $G=P_n$ be the path of order $n \ge 3$, and let $f_0:V(G_1) \rightarrow V(G_2)$ be a  constant function. Then $Z(C(P_n,f_0)) =2$.
\end{Prop}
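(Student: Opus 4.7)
The plan is to verify both the lower and upper bounds in sequence. For the lower bound, I simply invoke Theorem \ref{functibounds}: since $\delta(P_n)=1$, we immediately get $Z(C(P_n,f_0)) \ge 1+\delta(P_n) = 2$.

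For the upper bound, without loss of generality assume $f_0(u_i)=v_1$ for every $i$, and I propose the explicit zero forcing set $S=\{u_n,v_n\}$ of size $2$. The key observation that makes this work is that since $f_0$ is constant with value $v_1$, the vertices $v_2,v_3,\ldots,v_n$ in $G_2$ receive no edges from $G_1$, so $v_n$ retains degree $1$ in $C(P_n,f_0)$ with its unique neighbor being $v_{n-1}$. Consequently, the forcing chain $v_n\to v_{n-1}\to v_{n-2}\to\cdots\to v_2\to v_1$ proceeds along $G_2$ without obstruction, because at each step $v_j$ ($2\le j\le n$) has only one remaining white neighbor, $v_{j-1}$ (note that $v_1$ is the only $v_j$ with extra neighbors in $G_1$, and it is forced last).

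Once $v_1$ has been turned black, the vertex $u_n\in S$, whose neighbors in $C(P_n,f_0)$ are $u_{n-1}$ and $v_1=f_0(u_n)$, has exactly one white neighbor $u_{n-1}$, so $u_n\to u_{n-1}$. Iterating, for each $j$ with $2\le j\le n$, the vertex $u_j$ has neighbors $u_{j-1}$, $u_{j+1}$ (if $j<n$), and $v_1$; by the moment we reach $u_j$, all of these except $u_{j-1}$ are already black, so $u_j\to u_{j-1}$. This produces the chain $u_n\to u_{n-1}\to\cdots\to u_1$, blackening all of $G_1$. Hence $S$ is a zero forcing set, giving $Z(C(P_n,f_0))\le 2$, and combined with the lower bound we conclude $Z(C(P_n,f_0))=2$.

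There is no real obstacle here: the proof is a direct verification that relies on the fact that a constant function leaves $v_2,\ldots,v_n$ untouched by the cross edges, so $G_2$ behaves almost like a pendant path whose free endpoint $v_n$ drives forcing through $v_1$, at which point the attached star structure on $V(G_1)\cup\{v_1\}$ allows $u_n$ to finish the job along $G_1$. No appeal to Theorem \ref{cutV} or to induction on $n$ is needed.
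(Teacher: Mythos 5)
Your proof is correct. The set $S=\{u_n,v_n\}$ does force all of $C(P_n,f_0)$: since the cross-edges all land on $v_1$, the vertices $v_2,\dots,v_n$ keep their path degrees, so the chain $v_n\to v_{n-1}\to\cdots\to v_2\to v_1$ runs unobstructed, and only after $v_1$ is black does $u_n$ (whose neighbors are $u_{n-1}$ and $v_1$) launch the chain $u_n\to u_{n-1}\to\cdots\to u_1$; the matching lower bound follows from Theorem~\ref{functibounds} with $\delta(P_n)=1$ (or, even more simply, from the fact that $C(P_n,f_0)$ is not a path). The paper takes a different and shorter route: it observes that $C(P_n,f_0)$ is a graph of two parallel paths --- the two copies of $P_n$ drawn on parallel lines, with the cross-edges forming a non-crossing fan into $v_1$ --- and invokes the characterization in (b) of Theorem~\ref{ObsZ}, which yields both inequalities simultaneously. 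Your argument is more elementary and self-contained, exhibiting an explicit forcing set and forcing order instead of relying on the cited classification of graphs with zero forcing number $2$; the paper's argument is a one-liner but leaves to the reader the (easy) verification that the required planar two-parallel-paths drawing exists. Both approaches are valid.
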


\begin{proof}
Since $C(P_n, f_0)$ is a graph of two parallel paths, $Z(C(P_n, f_0))=2$ by (b) of Theorem \ref{ObsZ}.~\hfill
\end{proof}

\begin{Thm}\label{functiPbetween}
Let $G=P_n$ be the path of order $n \ge 3$. Let $f: V(G_1) \rightarrow V(G_2)$ be a function with $|Range(f)|=s$, where $1 < s < n$. Then $2 \le Z(C(P_n, f)) \le s+1$, and both bounds are sharp.
\end{Thm}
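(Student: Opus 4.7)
The plan is to prove the two inequalities separately, with the upper bound being the substantive step. For the lower bound $Z(C(P_n,f)) \ge 2$, I would invoke part (a) of Theorem \ref{ObsZ} and verify that $C(P_n, f) \not\cong P_{2n}$. Since $s = |Range(f)| \ge 2$ and $n \ge 3$, the set $W := Range(f)$ must contain an interior vertex $v$ of $G_2$, for which $\deg_{C(P_n, f)}(v) = \deg_{G_2}(v) + |f^{-1}(v)| \ge 2 + 1 = 3$; hence $C(P_n,f)$ has a vertex of degree at least three and cannot be a path.

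For the upper bound $Z(C(P_n,f)) \le s+1$, write $W = \{v_{i_1}, \ldots, v_{i_s}\}$ with $i_1 < \cdots < i_s$, and split into two cases mirroring the cycle argument of Theorem \ref{functiCbetween}. \emph{Case 1:} either $W \cap \{v_1, v_n\} \ne \emptyset$ or $W$ contains a pair of consecutive vertices of $G_2$; equivalently, $W$ is a zero forcing set for the standalone path $G_2$. Take $S = \{u_1\} \cup W$, of size $s+1$. The chain $u_1 \rightarrow u_2 \rightarrow \cdots \rightarrow u_n$ fires immediately, since at each step $f(u_i) \in W \subseteq S$ is already black; after $V(G_1)$ has been fully blackened, every $v \in W$ has all its $G_1$-neighbors black, so the residual forcing within $G_2$ reduces to ordinary path-forcing with initial set $W$, which succeeds by the case hypothesis.

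\emph{Case 2:} $W \cap \{v_1, v_n\} = \emptyset$ and $W$ is independent in $G_2$. Take $S = \{u_1, v_1\} \cup (W \setminus \{v_{i_1}\})$, again of size $s+1$. By the minimality of $i_1$ and the independence of $W$, the vertices $v_1, v_2, \ldots, v_{i_1-1}$ lie outside $W$ and therefore have empty preimages under $f$; consequently the chain $v_1 \rightarrow v_2 \rightarrow \cdots \rightarrow v_{i_1}$ propagates unobstructed through $G_2$ and restores $v_{i_1}$ to the black set. With all of $W$ now black, the forcing $u_1 \rightarrow u_2 \rightarrow \cdots \rightarrow u_n$ proceeds as in Case 1, and then the sweeps $v_{i_a} \rightarrow v_{i_a+1} \rightarrow \cdots$ (through each white gap up to the next $W$-vertex, and extended to $v_n$ past $v_{i_s}$) clear the remaining vertices of $G_2$, since by then every preimage of every $W$-vertex is black.

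The delicate point is this second case: one must omit exactly the leftmost vertex $v_{i_1}$ of $W$ and replace it with the endpoint $v_1$, and verify that the $G_2$-chain from $v_1$ reaches $v_{i_1}$ without being blocked by preimages of intermediate vertices, which works precisely because those intermediates lie outside $W$. Sharpness of the lower bound is witnessed by small examples such as $n = 3$ with $f(u_1) = v_1$ and $f(u_2) = f(u_3) = v_2$, for which the set $\{u_1, v_1\}$ is easily seen to force all of $C(P_3,f)$; sharpness of the upper bound will be exhibited by a construction in the spirit of Remark \ref{Csharp}.
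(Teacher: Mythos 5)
Your proposal follows essentially the same strategy as the paper's proof: the upper bound is obtained by exhibiting an explicit zero forcing set of size $s+1$ consisting of $u_1$ together with (a slight modification of) $Range(f)$, split into two cases according to the structure of $W=Range(f)$ inside $G_2$. Your case division (whether $W$ by itself is a zero forcing set of the standalone path $G_2$) differs only cosmetically from the paper's (whether $\langle W\rangle\cong sK_1$), and in the independent, endpoint-free case you swap in the endpoint $v_1$ for $v_{i_1}$ where the paper swaps in a $G_2$-neighbor of $v_{j_2}$ for $v_{j_1}$; both choices work, and your verification of the forcing order (first restore the omitted $W$-vertex via a chain through vertices with empty preimage, then sweep $G_1$, then sweep the gaps of $G_2$) is correct. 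The sharpness discussion likewise matches the paper's, with the upper-bound witness being exactly the path analogue of Remark \ref{Csharp}.

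One step is wrong as stated, though easily repaired: in the lower bound you claim that $s\ge 2$ and $n\ge 3$ force $W$ to contain an interior vertex of $G_2$. This fails, for instance, when $W=\{v_1,v_n\}$. The conclusion you want --- that $C(P_n,f)$ is not a path --- is still immediate, since every interior vertex $u_i$ of $G_1$ ($2\le i\le n-1$) has degree $2+1=3$ in $C(P_n,f)$ regardless of $f$; alternatively, the paper simply cites Theorem \ref{functibounds}, which gives $Z(C(P_n,f))\ge 1+\delta(P_n)=2$ directly. A smaller point: your lower-bound sharpness witness is a single $n=3$ example, whereas the paper's witness ($f(u_i)=v_1$ for $1\le i\le n-1$ and $f(u_n)=v_n$, which makes $C(P_n,f)$ a graph of two parallel paths, so $Z=2$ by (b) of Theorem \ref{ObsZ}) establishes sharpness for every $n\ge 3$.
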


\begin{proof}
Let $W=Range(f)$ with $|W|=s$, where $2 \le s \le n-1$. By Theorem \ref{functibounds}, $Z(C(P_n, f)) \ge 2$. Next, we show that $Z(C(P_n,f)) \le s+1$. We consider two cases.

\vspace{.1in}

\emph{Case 1. $\langle W \rangle \cong s K_1$:} In this case, no two vertices in $W$ are adjacent in $G_2$; let $W=\{v_{j_1}, v_{j_2}, \ldots, v_{j_s}\}$, where $j_1 <j_2 < \ldots < j_s$. One can readily check that $S=\{u_1, v_k\} \cup (W \setminus \{v_{j_1}\})$, for $v_kv_{j_2} \in E(G_2)$ with $j_1 <k<j_2$, forms a zero forcing set for $C(P_n, f)$ with $|S|=s+1$. 

\vspace{.1in}

\emph{Case 2. $\langle W \rangle \not\cong s K_1$:} In this case, there exist two adjacent vertices in $G_2$ that belong to $W$. One can readily check that $S=\{u_1\} \cup W$ forms a zero forcing set for $C(P_n, f)$ with $|S|=s+1$.

\vspace{.1in}

Thus, $Z(C(P_n,f)) \le s+1$. For the sharpness of the lower bound, let $f(u_i)=v_1$ for $1 \le i \le n-1$ and $f(u_n)=v_n$; then $Z(C(P_n, f))=2$ by (b) of Theorem \ref{ObsZ}. For the sharpness of the upper bound, consider the functigraph $C(P_{k^2}, f)$ with the function $f$ in Remark \ref{Csharp}, where $k \ge 3$; notice $|Range(f)|=k$, and one can check that $Z(C(P_{k^2}, f))=k+1$ using a similar argument as in the proof of Remark \ref{Csharp}.~\hfill
\end{proof}

\textit{Acknowledgement.} The authors greatly appreciate an anonymous referee for valuable comments, suggestions, and corrections on an earlier draft of this paper. In particular, they thank the said referee for suggesting the problems addressed in Remarks \ref{refereeQ1} and \ref{refereeQ2}, as well as for providing 
the example in Figure \ref{ZPerdownBig}.

%%%%%%%%%%%%%%%%%%%%%%%%%%%%%%%%%%%%
%%%%%%%%%%%%%%%%%%%%%%%%%%%%%%%%%%%%

\end{document}